\DeclarePairedDelimiter\floor{\lfloor}{\rfloor}
\numberwithin{equation}{section}
\newtheorem{thm}{Theorem}[section]
\newtheorem{defn}[thm]{Definition}
\newtheorem{rem}{Remark}
\newtheorem{nt}{Note}
\newtheoremstyle{case}{}{}{}{}{}{:}{ }{}
\theoremstyle{case}
\def\and{%
  \end{tabular}%
  \begin{tabular}[t]{c}}%
\def\@fnsymbol#1{\ensuremath{\ifcase#1\or 1\or 2\or 3\or
   4\or 5\or 6\or 7\or 8\or 9\else\@ctrerr\fi}}
\date{}
\begin{document}
\baselineskip=0.70cm
{\title{\textbf{Exact solutions of generalized non-linear time-fractional reaction-diffusion equations with time delay }}
	\author{P. Prakash$^1$, Sangita Choudhary$^2$, and Varsha Daftardar-Gejji$^{2*}$\\
$^1$ Department of Mathematics,\\
			Amrita Vishwa Vidyapeetham, Coimbatore-641112, India.\\
		$^2$Department of Mathematics,\\
			Savitribai Phule Pune University, Pune-411007, India.\\
			\textit{$^1$vishnuindia89@gmail.com,\,$^2$schoudhary1695@gmail.com,}\\ 	\textit{$^{2*}$vsgejji@gmail.com,
		$^{2*}$vsgejji@math.unipune.ac.in.}
			}
		\date{}
		\maketitle
\maketitle
\begin{abstract}
In this paper, we propose the invariant subspace approach to find exact solutions of time-fractional partial differential equations (PDEs) with time delay. An algorithmic approach of finding invariant subspaces for the generalized non-linear time-fractional reaction-diffusion equations with time delay is presented. We show that the fractional reaction-diffusion equations with time delay admit several invariant subspaces which further yields several distinct analytical solutions. We also demonstrate how to derive exact solutions for time-fractional PDEs with multiple time delays. Finally, we extend invariant subspace method to more generalized time-fractional PDEs with non-linear term involving time delay.
\end{abstract}
\textbf{Key-words}\\
Delay reaction-diffusion equation, Invariant subspace method, Exact solutions, Laplace transform, Delay partial differential equations.
\section{Introduction}
Many natural phenomena depend not only on the current state of the system at a particular time but also on the previous states. This memory effect can be successfully modeled by using the theory of delay differential equations (DDEs) \cite{in13,in14,ml}. Over the past few decades, the study of DDEs has helped to investigate many complex and natural non-linear phenomena in climate modeling, bioengineering \cite{bgbm}, control theory \cite{con}, agriculture \cite{agri}, traffic models \cite{tri1}, epidemiology and population dynamics \cite{in13,in14}, chemical kinetics \cite{che1}, chaos \cite{chaos}  and other areas of science and engineering \cite{hs,in13,in14,new1,ml,new2,new3}. Many physical models, especially non-linear ones, are methodically and effectively analyzed with the help of fractional calculus and in particular, with fractional delay differential equations (FDDEs) \cite{pi,kai,rh,moo,bmbg15, bg10, bgbm12}. The subject of fractional delay PDEs is rather recent and has proven to be a powerful tool in describing various natural and scientific phenomena in Science and Engineering.

In the recent years, many researchers have made attempts to find and study solutions of non-linear fractional or integer-order PDEs without delay using analytical and numerical methods such as Lie symmetry analysis method \cite{pra1,pra2,pra5}, Adomian decomposition method \cite{Ge,mo}, Hirota bilinear method \cite{dif2} and so on. In general, it is very difficult to derive exact solutions of non-linear fractional delay PDEs. Analytical solutions for most of the classical PDEs with delay are still not available. Polyanin and Zhurov \cite{rus} have investigated and found exact solutions of delay reaction-diffusion equation. Non-linear time-fractional delay reaction-diffusion equations are more complex than their integer-order counterparts, as fractional derivatives are more involved than the classical derivatives. Thus finding exact solutions is tedious and challenging task in case of non-linear fractional delay PDEs.

The present paper deals with the investigation of analytical solutions for the generalized non-linear time-fractional reaction-diffusion (RD) equation with time delay
\begin{align}
\begin{aligned}\label{crd}
&\dfrac{\partial^{\alpha}u}{\partial t^{\alpha}}=\left[D(u)u_{x}\right]_{x}+R(u,\bar{u}),\ t>0, \ \alpha\in(0,1],\\
&u(x,t)=\Phi(x,t),\ t\in [-\tau,0],
\end{aligned}
\end{align}
and non-linear time-fractional heat equation with source term (or RD equation) involving time delay
\begin{align}
\begin{aligned}\label{qrd}
&\dfrac{\partial^{\alpha}u}{\partial t^{\alpha}}=\left[D(u)u_{x}\right]_{x}+R(u,\bar{u}), t>0,\  \alpha\in(0,1],\\
&u(x,t)=\Psi(x,t),\ t\in [-\tau,0].
\end{aligned}
\end{align}
Here $u=u(x,t),\ \bar{u}=u(x,t-\tau),\ \tau>0$ and $x\in \mathbb{R}$. The term $D(u)$ is transfer/ diffusion coefficient that depends on $u$, and $R(u,\bar{u})$ denotes the rate of reactions known as kinetic function which depends on $u$ and $\bar{u}$ involving time delay. These equations are widely used to describe plenty of natural phenomena in the areas of Science and Engineering \cite{in2, in3, in4, in5}. Note that when $\alpha=1$ and $\tau=0$, the equation \eqref{qrd} can be referred as quasilinear heat equation or reaction-diffusion equation \cite{gs}. The analytical solutions of integer-order PDE without delay corresponding to \eqref{qrd}, was discussed by Galaktionov and Svirshchevskii using the invariant subspace method \cite{gs}.

To best of our knowledge, no one has investigated the exact solutions of time-fractional delay reaction-diffusion equations in the literature so far. Here, we derive exact solutions for time-fractional reaction-diffusion equations with time delay using the invariant subspace method (ISM). ISM was introduced by Galaktionov and Svirshchevskii \cite{gs}, and further developed by many researchers \cite{ps1,ps2,sc,pra4,pra3,hes,s1,pa1,pa2,ra,rk,wx1,wx,wy,vis} for integer and fractional order scalar and coupled PDEs. The main objective of the present paper is to demonstrate that the generalized non-linear time-fractional delay reaction-diffusion equations admit several invariant subspaces which further yields several distinct exact solutions. We also present the exact solution for non-linear reaction-diffusion equations with multiple time delays.

The research paper is organized as follows. In section 2, we provide some basic concepts and results that are used throughout this paper. Further we generalize the theoretical framework of ISM for solving non-linear time-fractional PDEs involving a linear term with time delay. Section 3 presents an algorithmic approach for finding invariant subspaces. Further we construct invariant subspaces of dimension n=1, 2, 3, 4, 5, and corresponding linear as well as non-linear operators for both RD equations \eqref{crd}-\eqref{qrd} under study. In section 4, we illustrate the applicability and effectiveness of the extended ISM by finding exact exponential, polynomial and trigonometric solutions of the above-mentioned time-fractional delay PDEs. In section 5, we discuss the extension of the invariant subspace method to time-fractional PDEs with multiple time delays, and find its exact solutions. Further, we employ the ISM to more generalized non-linear PDEs with time delay. Finally in section 6, conclusions are summarized.
\section{Preliminaries}
 In this section, we provide some relevant basic concepts and definitions of the fractional calculus. Further we present brief details of the ISM for time-fractional PDEs involving time delay.
 \subsection{Prerequisites of fractional calculus}
\begin{defn}[\cite{pi,kai}]
Let $\varphi(t)\in C^{n}[a,b]$ and $\alpha>0$.
Then the Caputo fractional derivative of order $\alpha>0$ is defined by
\begin{eqnarray}\label{zz1}
\nonumber \dfrac{d^{\alpha}\varphi(t)}{dt^{\alpha}}
=\left\{
\begin{array}{ll}  \dfrac{1}{\Gamma(n-\alpha)}\displaystyle\int\limits^{t}_{0}\dfrac{\varphi^{(n)}(s)}{(t-s)^{\alpha-n+1}}ds, & n-1<\alpha< n,\\
 \varphi^{(n)}(t), & \alpha=n,\ n\in\mathds{N},
\end{array}
\right.
\end{eqnarray}
where $C^{n}[a,b]$ denotes the set of all continuously $n$-times differentiable functions.
\end{defn}
\begin{defn} \cite{am}
Mittag-Leffler function with three parameters, also known as Prabhakar function, is defined as
\begin{equation}
\mathbf{E}_{\alpha,\beta}^{\gamma}(z)=\sum\limits_{k=0}^{\infty}\dfrac{(\gamma)_{k}z^{k}}{\Gamma(\alpha k+\beta)k!},\ \alpha, \beta, \gamma\in\mathds{C},\ \mathcal{R}e(\alpha)>0, \ \mathcal{R}e(\beta)>0,
\end{equation}
where $(\gamma)_{k}=\dfrac{\Gamma(\gamma+k)}{\Gamma(\gamma)}$ and  $(\gamma)_{0}=1$, $\mathcal{R}e(\gamma)>0$.
\end{defn}
\begin{rem}
The functions $E_{\alpha,1}^{1}$ and $E_{\alpha,\beta}^1$ are called as one-parameter and two-parameters Mittag-Leffler functions respectively.
\end{rem}
\begin{nt}[\cite{pi,kai}]
The Laplace transform of Caputo fractional derivative of order $\alpha\in(n-1, n], n\in\mathds{N},$ is
\begin{equation*}
L\left\{\dfrac{d^{\alpha}\varphi(t)}{dt^{\alpha}}\right\}=s^{\alpha}\hat{\varphi}(s)-\sum\limits^{n-1}_{k=0}s^{\alpha-k-1}\varphi^{(k)}(0), \  \mathcal{R}e(s)>0.
\end{equation*}
\end{nt}
\begin{nt}\cite{am}
  The Laplace transformation of the generalized Mittag-Leffler function $t^{\beta-1}\mathbf{E}_{\alpha,\beta}^{\gamma}(\pm at^{\alpha})$ is given by
\begin{equation*}
L\left\{t^{\beta-1}\mathbf{E}_{\alpha,\beta}^{\gamma}(\pm at^{\alpha})\right\}=\dfrac{s^{\alpha\gamma-\beta}}{(s^{\alpha}\mp a)^{\gamma}},    \mathcal{R}e(s)>|a|^{\frac{1}{\alpha}}.
\end{equation*}
\end{nt}
\begin{nt}[\cite{lap,joi}] Delayed unit step function or Heaviside function is defined as
\begin{equation}\label{un}
H(t-a)=\left\{
         \begin{array}{ll}
           1, & t\geq a; \\
           0, & t<a.
         \end{array}
       \right.
\end{equation}
Laplace transformation of unit step function \eqref{un} is given by
\begin{equation*}
L\left\{H(t-a)\right\}=\dfrac{e^{-a s}}{s},\ \mathcal{R}e(s)>0.
\end{equation*}
If $L\left\{\varphi(t)\right\}=\hat{\varphi}(s)$ for $\mathcal{R}e(s)>0$, then
\begin{equation*}
L\left\{H(t-a)\varphi(t-a)\right\}=e^{-as}\hat{\varphi}(s),\ a\geq0.
\end{equation*}
By taking the inverse Laplace transform of both sides, we get
\begin{equation*}
H(t-a)\varphi(t-a)=L^{-1}\left\{e^{-as}\hat{\varphi}(s)\right\}.
\end{equation*}
\end{nt}
\begin{thm}\label{conv}
If $L\left\{\varphi_{1}(t)\right\}=\displaystyle\hat{\varphi}_{1}(s)$ and $L\left\{\varphi_{2}(t)\right\}=\hat{\varphi}_{2}(s)$, then
$$L^{-1}\left\{\hat{\varphi}_{1}(s)\hat{\varphi}_{2}(s)\right\}=\varphi_{1}(t)\star \varphi_{2}(t),$$
where \textquoteleft\ $\star$\textquoteright\ denotes the convolution of $\varphi_1(t)$ and $\varphi_2(t)$, and is defined by the integral
\begin{equation*}\label{l4}
\varphi_{1}(t)\star \varphi_{2}(t)=\int\limits^{t}_{0}\varphi_{1}(t-\xi)\varphi_{2}(\xi)d\xi=\int\limits^{t}_{0}\varphi_{2}(t-\xi)\varphi_{1}(\xi)d\xi.
\end{equation*}
\end{thm}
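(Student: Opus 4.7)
The plan is to prove this classical convolution identity in the standard way, by writing the product $\hat{\varphi}_1(s)\hat{\varphi}_2(s)$ as an iterated integral over the first quadrant and performing a change of variables that rewrites it as a single Laplace transform of the convolution integral. Equivalently, I will establish $L\{(\varphi_1\star\varphi_2)(t)\}=\hat{\varphi}_1(s)\hat{\varphi}_2(s)$ and then invert.

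First I would expand
\begin{equation*}
\hat{\varphi}_1(s)\hat{\varphi}_2(s)=\int_0^\infty e^{-su}\varphi_1(u)\,du\int_0^\infty e^{-sv}\varphi_2(v)\,dv=\iint_{[0,\infty)^2} e^{-s(u+v)}\varphi_1(u)\varphi_2(v)\,du\,dv,
\end{equation*}
where the combined double integral is legitimate under Fubini's theorem provided $\mathcal{R}e(s)$ is large enough for both Laplace integrals to converge absolutely. Next I would apply the substitution $t=u+v,\ \xi=v$, whose Jacobian has absolute value one; the image of the first quadrant is the wedge $\{(t,\xi):0\le\xi\le t<\infty\}$. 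The transformed integral becomes
\begin{equation*}
\int_0^\infty e^{-st}\left(\int_0^t \varphi_1(t-\xi)\varphi_2(\xi)\,d\xi\right)dt=L\bigl\{(\varphi_1\star\varphi_2)(t)\bigr\},
\end{equation*}
and taking $L^{-1}$ of both sides gives the stated identity. To handle the equivalent form of the convolution, I would substitute $\eta=t-\xi$ inside $\int_0^t\varphi_1(t-\xi)\varphi_2(\xi)\,d\xi$; since $d\eta=-d\xi$ and the limits reverse from $[0,t]$ to $[t,0]$, the integral becomes $\int_0^t\varphi_2(t-\eta)\varphi_1(\eta)\,d\eta$, proving commutativity.

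The main obstacle is not any algebraic manipulation but the analytic justification of swapping integration order and of performing the change of variables on an unbounded region. I would overcome this by assuming the mild standing hypotheses under which Laplace transforms are routinely used in this paper (piecewise continuity together with exponential order, which ensures absolute integrability of $e^{-s(u+v)}\varphi_1(u)\varphi_2(v)$ for $\mathcal{R}e(s)$ exceeding both abscissas of convergence). Under these hypotheses Fubini's theorem applies directly, so the computation above is rigorous and the theorem follows.
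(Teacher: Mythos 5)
Your proof is correct: the Fubini--change-of-variables argument you give is the standard and complete proof of the Laplace convolution theorem, and your attention to the hypotheses (piecewise continuity and exponential order, with $\mathcal{R}e(s)$ beyond both abscissas of convergence) is exactly what is needed to justify the interchange of integrals. The paper itself states this result as a known preliminary without proof, so there is no alternative argument to compare against; yours is the canonical one.
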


\subsection{Invariant subspace method for non-linear time-fractional PDEs involving a linear term with time delay}
Consider the non-linear time-fractional PDE involving a linear term with time delay
\begin{align}\label{gel}
\begin{aligned}
&\dfrac{\partial^{\alpha}u}{\partial t^{\alpha}}=\mathcal{H}[u,\bar{u}]\equiv\mathcal{N}[u]+\delta\bar{u},\ \alpha>0,\ t> 0,\ \delta, x\in \mathbb{R},\\
&u(x,t)=\Phi(x,t),\ t\in [-\tau,0].
\end{aligned}
\end{align}
where $u\equiv u(x,t),\ \bar{u}\equiv u(x,t-\tau),$ and $\tau>0$.\\
Here $\mathcal{N}[u]=N\left[x,u,\dfrac{\partial  u}{\partial x},\dfrac{\partial^2 u}{\partial x^2},\dots,\dfrac{\partial^k u}{\partial x^k}\right]$ denotes a non-linear differential operator of order $k\ (k\in \mathbb{N})$, and $\dfrac{\partial^{\alpha}(\cdot)}{\partial t^{\alpha}}$ is a time-fractional derivative in the Caputo sense.\\
We define the linear space
\begin{eqnarray}
\begin{aligned}\label{lin}
\mathcal{W}_{n}=&\left\{z \ \Big| \mathcal{L}[z]=a_n\dfrac{d^nz}{dx^n}+a_{n-1} \dfrac{d^{n-1}z}{dx^{n-1}}+\dots+a_1\dfrac{dz}{dx}+a_{0} z=0,a_i\in\mathbb{R}, n\in \mathbb{N}\right\}\\
=&\ \text{Span}\left\{\varphi_1(x),\dots,\varphi_n(x)\right\},
\end{aligned}
\end{eqnarray}
where $\varphi_{1}(x),\dots,\varphi_{n}(x)$ form a solution set for some linear ordinary differential equation (ODE) of order $n$.\\
Corresponding to non-linear operator $\mathcal{H}[u,\bar{u}],$ the vector space $\mathcal{W}_n$ is invariant if $\mathcal{H}[\mathcal{W}_n,\mathcal{W}_n]\subseteq \mathcal{W}_n$, \textit{i.e.,} $\mathcal{H}[u,\bar{u}]\in \mathcal{W}_n$, for all $u,\bar{u}\in \mathcal{W}_n$.
If $\mathcal{W}_n$ is an invariant space corresponding to operator $\mathcal{H}[u,\bar{u}]$, then the invariant condition of $\mathcal{H}[u,\bar{u}]$  reduces to
\begin{equation}
\mathcal{L}\left(\mathcal{H}[u,\bar{u}]\right)\Big|_{\mathcal{L}(u)=0}=a_{n}\dfrac{d^n\mathcal{H}}{dx^n}+a_{n-1}\dfrac{d^{n-1}\mathcal{H}}{dx^{n-1}}+\dots+a_{1}\dfrac{d\mathcal{H}}{dx}+a_0\mathcal{H}\Big|_{\mathcal{L}(u)=0}=0,\ n\in\mathbb{N},
\end{equation}
where the constants $a_{n},\dots,a_0$ are to be determined.
Thus, there exist $n$-functions $\Theta_{1}$, $\Theta_{2}$,$\dots$,$\Theta_{n}$ such that
\begin{equation*}
\mathcal{H}\left[\sum^{n}_{i=1}A_{i}\varphi_{i}(x),\sum^{n}_{i=1}\bar{A}_{i}\varphi_{i}(x)\right]=\sum^{n}_{i=1}\Theta_{i}\left(A_{1},A_{2},\dots,A_{n}\right)\varphi_i(x)+\delta\sum\limits^{n}_{i=1}\bar{A}_i\varphi_{i}(x),\  \end{equation*}
where $A_{i}$ and  $\bar{A}_{i}$ $(i=1,2,\dots,n)$ are arbitrary real constants. Here $\left\{\Theta_i\right\}'$s are known as expansion coefficients of $\mathcal{H}[u,\bar{u}]\in\mathcal{W}_n$ with respect to the basis functions $\left\{\varphi_{i}\right\}'$s.\\
It follows that the time-fractional PDE with linear delay \eqref{gel} has a solution of the form
\begin{equation}
u(x,t)=\sum^{n}_{i=1}A_{i}(t)\varphi_{i}(x),
\end{equation}
where the coefficients $A_1(t), A_2(t),\dots, A_n(t)$ satisfy the following system of fractional delay ODEs
\begin{equation}\label{la}
\dfrac{d^{\alpha}A_i(t)}{dt^{\alpha}}=\Theta_{i}\left(A_{1}(t),A_{2}(t),\dots,A_{n}(t)\right)+\delta A_i(t-\tau),\ i=1,2,\dots,n.
\end{equation}
The fractional delay ODEs \eqref{la} are comparatively simple to handle.\\
\noindent\textbf{Note:} Using the invariant subspaces, the given time-fractional delay PDEs reduces to the system of fractional delay ODEs.
\section{Classification of invariant subspaces for time-fractional RD equations with time delay}
Here, we present an algorithmic approach to find invariant subspaces for the following equations:
\begin{itemize}
\item[(i)] Generalized time-fractional reaction-diffusion equation with time delay \eqref{crd}.
\item[(ii)]Time-fractional heat equation with source term involving time delay \eqref{qrd}.
\end{itemize}
\subsection{Generalized time-fractional RD equation involving linear time delay}
Generalized time-fractional reaction-diffusion equation involving a linear term with time delay \eqref{crd} can be written as
\begin{align*}\label{crd1}
\begin{aligned}
&\dfrac{\partial^{\alpha}u}{\partial t^{\alpha}}=\mathcal{H}_1[u,\bar{u}]\equiv D(u)u_{xx}+D_{u}(u)(u_{x})^2+R(u,\bar{u}),\ t>0,\  \alpha\in(0,1],\\
&u(x,t)=\Phi(x,t),\ t\in [-\tau,0],
\end{aligned}
\end{align*}
where the terms $D(u)$ and $R(u,\bar{u})$ denote the diffusion, and reaction with time delay ($\tau>0$), respectively, and $x\in \mathbb{R}$.\\

\noindent\textbf{An algorithmic approach for finding invariant subspaces:}
It may be noted that when $D(u)$ and $R(u,\bar{u})$ are arbitrary, there
exists no invariant subspace for the above Eq. \eqref{crd}. In this section, we consider a linear term incorporated with time delay, \textit{i.e.,} $R[u,\bar{u}]=M(u)+\delta\bar{u}$, where $M(u)$ is an arbitrary function of $u$.
Invariant subspace dimension theorem implies that the possible dimension of invariant subspaces corresponding to operator $\mathcal{H}_1$, is $n\leq 2k+1=1,2,3,4,5$ as order $k=2$ (\textit{cf.} \cite{gs}).\\
Consider the more general five-dimensional linear space
 \begin{equation}\label{in1}
 \begin{aligned}
 \mathcal{W}_{5}=&\left\{z \ \mid \mathcal{L}[z]=a_5z^{(v)}+a_4z^{(iv)}+a_3z{'''}+a_{2} z{''}+a_1z'+a_{0} z=0\right\}\\
 =&\ \text{Span} \left\{\varphi_1(x),\varphi_2(x),\varphi_3(x),\varphi_{4}(x),\varphi_{5}(x)\right\},
 \end{aligned}
 \end{equation}
 where $z^{(k)}=\dfrac{d^kz}{dx^k},\ k=1,\dots,5,$ and $\varphi_{1}(x),\dots,\varphi_{5}(x)$ form a solution set for some linear ODE of order $5$. Thus, the invariant condition of $\mathcal{H}_1[u,\bar{u}]$ takes the form
\begin{equation}\label{inv}
\mathcal{L}\left(\mathcal{H}_1[u,\bar{u}]\right)\Big|_{\mathcal{L}(u)=0}=a_5\dfrac{d^5\mathcal{H}_1}{dx^5}+a_{4}\dfrac{d^{4}\mathcal{H}_1}{dx^4}+\dots+a_{1}\dfrac{d\mathcal{H}_1}{dx}+a_0\mathcal{H}_1\Big|_{\mathcal{L}(u)=0}=0,
\end{equation}
where $\mathcal{H}_{1}[u,\bar{u}]=D(u)u_{xx}+D_{u}(u)(u_{x})^2+M(u)+\delta\bar{u},$ and the constants $a_{5},\dots,a_0$ are to be determined.
Simplifying equation \eqref{inv}, we get
\begin{align*}
&(-20a_1D_{uu}+a_3M_{uuu})(u_x)^3+(-6a_0D_u+a_2M_{uu}+a_1a_4D_u)(u_x)^2+210D_{uuu}(u_x)^2u_{xx}u_{xxx}\nonumber\\
&+105D_{uu}u_xu_{xx}u_{xxx}-6a_4D_u u_{xx}u_{xxxx}+(-11a_3D_u+10M_{uu})u_{xx}u_{xxx}+(-2a_3D_u+(a_4)^2\nonumber\\
&\times D_u+5M_{uu})u_xu_{xxxx}+(-3a_2D_u+4a_4M_uu+a_3a_4D_u)u_xu_{xxx}+(-25a_1D_u+3a_3M_{uu}+\nonumber\\
&a_2a_4D_{u})u_x u_{xx}-21a_0D_{u}u_{xx}u-6a_4D_{uu}(u_x)^2 u_{xxxx}+(-11a_3D_{uu}+10M_{uuu})(u_x)^2u_{xxx}+\nonumber\\
&(-15a_2D_{uu}+6a_4M_{uuu})(u_x)^{2}u_{xx}+(10a_3D_{uuu}+10M_{uuuu})(u_x)^3u_{xx}+(15a_3D_{uu}+15M_{uuu})\nonumber\\
&\times u_x(u_{xx})^2+45a_4D_{uuu} (u_x)^2(u_{xx})^2+15a_4D_{uuuu}(u_x)^4u_{xx}+20a_4D_{uuu}(u_x)^3u_{xxx}+60a_4\times\nonumber\\
& D_{uu}u_{x}u_{xx}u_{xxx}+15a_4D_{uu}(u_{xx})^3+3a_4M_{uu}(u_{xx})^2+10a_4D_{u}(u_{xxx})^2+(a_2D_{uuu}+a_4M_{uuuu})\nonumber\\
&\times (u_x)^4+(a_3D_{uuuu}+M_{uuuuu})(u_x)^5+a_4D_{uuuuu}(u_x)^6+105D_{uuu}u_{x}(u_{xx})^3+105D_{uuuu}u_x^3\nonumber\\
&\times (u_{xx})^2+105D_{uu}(u_{xx})^2u_{xxx}+70D_{uu}u_{x}(u_{xxx})^2-18a_2D_{u}(u_{xx})^2+35D_{uuu}(u_{x})^3u_{xxxx}\nonumber\\
&+21 D_{uuuuu}(u_x)^5u_{xx}+35D_{uuuu}(u_x)^4u_{xxx}+a_0M_{u}u+35D_{u}u_{xxx}u_{xxxx}+D_{uuuuu}(u_x)^7-21\nonumber\\
&\times a_0\ D_{uu}(u_x)^2u+a_0a_4D_{u}u_xu+a_0M=0.
\end{align*}
Simplification of the above equation gives an over-determined system, solving which we get different values of $D(u), M(u)$ and $a_i'$s. Corresponding to distinct $D(u), M(u),$ and $a_0$, $a_1$, $a_2$, $a_3$, $a_4$ and $a_{5}$ we find all possible operators $\mathcal{H}_1$, and their invariant subspaces of various dimensions as discussed below.
 \subsubsection{Invariant subspaces and corresponding  non-linear differential operators}
Here we present exponential, polynomial and trigonometric subspaces of the dimension $n=1,2,3,4,5$ along with the corresponding non-linear operators for \eqref{crd} .

\noindent\textbf{Exponential subspaces}\\
\textbf{1-D subspace:} 
For $a_5=a_4=a_3=a_2=0, a_1=1$, and $a_0\in \mathbb{R}$ in \eqref{in1}, the one-dimensional exponential subspace $\mathcal{W}_1=\left\{z \ \mid \mathcal{L}[z]=z'+a_{0} z=0\right\}=\text{Span}\left\{e^{-a_0 x}\right\}$ is invariant space for the operator
  \begin{align*}
  \mathcal{H}_1[u,\bar{u}]=&\left(b_nu^n+b_{n-1}u^{n-1}+\dots+b_1u+b_0\right)u_{xx}\\
&+\left(nb_nu^{n-1}+(n-1)b_{n-1}u^{n-2}+\dots+2b_2u+b_1\right)\left(u_x\right)^2\\
&+c_{n+1}u^{n+1}+c_{n}u^{n}+\dots+c_1u+\delta\bar{u},\ b_i,c_{i+1},\delta\in\mathbb{R},\ i=0,\dots,n,\ n\in\mathbb{N},
\end{align*}
if $c_{k+1}=-(k+1)a_0^2b_k$, $k=1,2,\dots,n$, $n\in\mathbb{N}$.

\noindent\textbf{2-D subspace:} 
For $a_5=a_4=a_3=a_0=0$ and $a_2=1$ in \eqref{in1}, we get the two-dimensional exponential subspace $\mathcal{W}_2=\left\{z \ \mid \mathcal{L}[z]= z{''}+a_1z'=0\right\}=\text{Span}\left\{1, e^{-a_1 x}\right\}$. $\mathcal{W}_2$ is invariant under $\mathcal{H}_1[u,\bar{u}]$ if
$D(u)=b_1 u+b_0$ and $R(u,\bar{u})=-2a_1^2b_1 u^2+c_1u+\delta\bar{u}+c_0$, $a_1,b_1,b_0,c_1,c_0,\delta\in\mathbb{R}$.\\


\noindent\textbf{Polynomial subspaces}\\
\noindent\textbf{2-D subspace:} 
When $a_0=a_1=a_3=a_4=a_5=0$ and $a_2\in\mathbb{R}$ in \eqref{in1}, we observe that the two-dimensional polynomial subspace $\mathcal{W}_2=\left\{z \ \mid \mathcal{L}[z]= a_2z{''}=0\right\}=\text{Span}\left\{1, x\right\}$ is invariant space with respect to $\mathcal{H}_1[u,\bar{u}]$ for the following choices of $D(u)$ and $R(u,\bar{u})$:\\
(i) $D(u)=b_1 u+b_0$ and $R(u,\bar{u})=c_1u+\delta\bar{u}+c_0$.\\
(ii) $D(u)=b_2u^2+b_1 u+b_0$ and $R(u,\bar{u})=c_1u+\delta\bar{u}+c_0$.

\noindent\textbf{3-D subspace:} 
Let $a_0=a_1=a_2=a_4=a_5=0$ and $a_3\in\mathbb{R}$ in \eqref{in1}.
Then the three-dimensional polynomial subspace $\mathcal{W}_3=\left\{z \ \mid \mathcal{L}[z]= a_3z{'''}=0\right\}=\text{Span}\left\{1, x, x^2\right\}$ is invariant corresponding to the differential operator $\mathcal{H}_1[u,\bar{u}]$ where $D(u)=b_1 u+b_0$ and $R(u,\bar{u})=c_1u+\delta\bar{u}+c_0$.

%

\begin{nt}
	It should be noted that when $D(u)=u^{-\frac{3}{2}}$ and $R(u,\bar{u})=c_1u+\delta\bar{u}+c_0$, the generalized time-fractional reaction-diffusion equation with linear time delay \eqref{crd} reduces to time-fractional fast diffusion equation involving a linear term with time delay
\begin{equation} \label{gs3}
\dfrac{\partial^{\alpha}u}{\partial t^{\alpha}}=\left(u^{-\frac{3}{2}}u_{x}\right)_{x}+c_1u+\delta\bar{u}+c_0.
\end{equation}
	When $\alpha=1,$ Eq. \eqref{gs3} admits a polynomial invariant subspace
$\mathcal{W}_{4}=\mathfrak{L}\left\{1,x,x^2,x^3\right\}$ of dimension four.
For $\alpha=1$, and $c_1=c_0=\delta=0$, Eq. \eqref{gs3} was studied by Galaktionov and Svirshchevskii \cite{gs}, and they found its exact solution in the polynomial subspace $\mathcal{W}_{4}$.
\end{nt}
\begin{nt}
When $D(u)=u^{-\frac{4}{3}}$ and $R(u,\bar{u})=u^{\frac{7}{3}}+c_1u+\delta\bar{u}+c_0$, the generalized time-fractional reaction-diffusion equation with linear time delay \eqref{crd} becomes
\begin{equation} \label{gs2}
\dfrac{\partial^{\alpha}u}{\partial t^{\alpha}}=\left(u^{-\frac{4}{3}}u_{x}\right)_{x}+u^{\frac{7}{3}}+c_1u+\delta\bar{u}+c_0.
\end{equation}Eq. \eqref{gs2} admits a polynomial invariant subspace
$\mathcal{W}_{5}=\mathfrak{L}\left\{1,x,x^2,x^3,x^4\right\}$ when $\alpha=1$.
It may further be noted that for $\alpha=1$, and $c_1=c_0=\delta_1=0$, the equation \eqref{gs2} was discussed and its exact solution in 5-dimensional polynomial space was found by Galaktionov and Svirshchevskii \cite{gs}.
\end{nt}

\noindent\textbf{Trigonometric subspaces}\\
\noindent\textbf{2-D subspace:} 
In this case, we assume that $a_5=a_4=a_3=a_1=0$, $a_2=1$ and $a_0\in\mathbb{R}$ in \eqref{in1}.
Thus, the two-dimensional trigonometric space is $\mathcal{W}_2=\left\{z \ \mid \mathcal{L}[z]= z{''}+a_0z=0\right\}=\text{Span}\left\{\cos{(\sqrt{a_0}x)},\sin{(\sqrt{a_0}x)}\right\}$. $\mathcal{H}_1[u,\bar{u}]$ admits invariant subspace $\mathcal{W}_2$ if $D(u)=b_2u^2+b_0$ and $R(u,\bar{u})=3a_0b_2u^3+c_1u+\delta\bar{u}$.

\noindent\textbf{3-D subspace:} 
For $a_5=a_4=a_2=a_0=0$, $a_1\in\mathbb{R}$ and $a_3=1$ in \eqref{in1}, the 3-dimensional space $\mathcal{W}_3=\left\{z \ \mid \mathcal{L}[z]= z{'''}+a_1z{'}=0\right\}=\text{Span}\left\{1,\cos{(\sqrt{a_1}x)},\sin{(\sqrt{a_1}x)}\right\}$ is invariant corresponding to operator $\mathcal{H}_1[u,\bar{u}]$, where $D(u)=b_1 u+b_0$ and $R(u,\bar{u})=2a_1b_1 u^2+c_1u+\delta\bar{u}+c_0$.

\begin{nt}
 For $D(u)=u^{-\frac{4}{3}}$ and $R(u,\bar{u})=-u^{-\frac{1}{3}}+c_1u+\delta\bar{u}+c_0$, the generalized reaction-diffusion equation with linear time delay \eqref{crd} reduces to the following time-fractional quasi-linear heat equation with linear time delay
\begin{equation} \label{gs1}
\dfrac{\partial^{\alpha}u}{\partial t^{\alpha}}=\left(u^{-\frac{4}{3}}u_{x}\right)_{x}-u^{-\frac{1}{3}}+c_1u+\delta\bar{u}+c_0.
\end{equation}
When $\alpha=1,$ Eq. \eqref{gs1} admits following invariant space of dimension five:
$$\mathcal{W}_{5}=\textrm{Span}\left\{1,\sin\left(\frac{2}{\sqrt{3}}x\right),\cos\left(\frac{2}{\sqrt{3}}x\right),\sin\left(\frac{4}{\sqrt{3}}x\right),\cos\left(\frac{4}{\sqrt{3}}x\right)\right\}.$$ For $\alpha=1$, $c_1=\delta_1=c_0=0$, time-fractional quasi-linear heat equation \eqref{gs1} was investigated and Compacton solutions with period $2\pi$ were derived by Galaktionov and Svirshchevskii \cite{gs}.
\end{nt}
\subsection{Time-fractional heat equation involving a source term with time delay}
In this section, we study the heat equation \eqref{qrd} involving a linear source term with time delay as follows
\begin{align*}\label{rdq}
&\dfrac{\partial^{\alpha}u}{\partial t^{\alpha}}=\mathcal{H}_2[u,\bar{u}]\equiv D(u)u_{xx}+R(u,\tilde{u}),\ t>0,\ \alpha\in(0,1],\\
&u(x,t)=\Psi(x,t),\ t\in [-\tau,0],\ \tau>0,
\end{align*}
where the terms $D(u)$ and $R(u,\bar{u})$ denote the diffusion and reaction with time delay respectively, and $x\in \mathbb{R}$.\\
Following the above procedure, we consider the general five-dimensional linear space
\begin{equation}\label{in2}
\begin{aligned}
 \mathcal{W}_{5}=&\left\{z \ \mid \mathcal{L}[z]=a_5z^{(v)}+a_4z^{(iv)}+a_3z{'''}+a_{2} z{''}+a_1z'+a_{0} z=0\right\}.
\end{aligned}
 \end{equation}
\subsubsection{Invariant subspaces and corresponding non-linear differential operators}
In this subsection, we classify exponential, polynomial and trigonometric subspaces of dimension $n=1,2,3,4,5$ with respect to the non-linear differential operators.\\
\noindent\textbf{Exponential subspaces}\\
\textbf{1-D subspace:} 
For $a_5=a_4=a_2=a_3=0$ and $a_1=1$ in \eqref{in2}, the obtained one-dimensional exponential subspace $\mathcal{W}_1=\left\{z \ \mid \mathcal{L}[z]=z'+a_{0} z=0\right\}=\text{Span}\left\{e^{-a_0 x}\right\}$ is a vector space which is invariant with respect to
  \begin{align*}
  \mathcal{H}_2[u,\bar{u}]=&\left(b_nu^n+b_{n-1}u^{n-1}+\dots+b_1u+b_0\right)u_{xx}\\
&+c_{n+1}u^{n+1}+c_{n}u^{n}+\dots+c_1u+\delta\bar{u},\ b_i,c_{i+1},\delta\in\mathbb{R},\ i=0,\dots,n,
\end{align*}
if $c_{k+1}=-a_0^2b_k$, $k\in\mathbb{N}$.

\noindent\textbf{2-D subspace:} 
If $a_5=a_4=a_3=a_0=0$ and $a_2=1$ in \eqref{in2}, then we get $\mathcal{W}_2=\left\{z \ \mid \mathcal{L}[z]= z{''}+a_1z'=0\right\}=\text{Span}\left\{1, e^{-a_1 x}\right\}$. This two-dimensional exponential subspace $\mathcal{W}_2$ is invariant corresponding to the operator $\mathcal{H}_2[u,\bar{u}],$ where
$D(u)=b_1 u+b_0$ and $R(u,\bar{u})=-a_1^2b_1 u^2+c_1u+\delta\bar{u}+c_0$.\\


\noindent\textbf{Polynomial subspaces}\\
\noindent\textbf{2-D subspace:} 
Parameters $a_0=a_1=a_3=a_4=a_5=0$ and $a_2\in\mathbb{R}$ in \eqref{in2}, leads to the two-dimensional polynomial subspace $\mathcal{W}_2=\left\{z \ \mid \mathcal{L}[z]= a_2z{''}=0\right\}=\text{Span}\left\{1, x\right\}$. Further note that $\mathcal{W}_2$ is invariant under $\mathcal{H}_2[u,\bar{u}]$ where $D(u)$ and $R(u,\bar{u})$ are as follows:\\
(i) $D(u)=b_1 u+b_0$ and $R(u,\bar{u})=c_1u+\delta\bar{u}+c_0$.\\
(ii) $D(u)=b_2u^2+b_1 u+b_0$ and $R(u,\bar{u})=c_1u+\delta\bar{u}+c_0$.\\
(iii) $D(u)=b_3u^3+b_2u^2+b_1 u+b_0$ and $R(u,\bar{u})=c_1u+\delta\bar{u}+c_0$.\\
(iv) $D(u)$-arbitrary and $R(u,\bar{u})=c_1u+\delta\bar{u}+c_0$.

\noindent\textbf{3-D subspace:} 
Let $a_0=a_1=a_2=a_4=a_5=0$ and $a_3\in\mathbb{R}$ in \eqref{in2}.
Then, the obtained three-dimensional polynomial subspace $\mathcal{W}_3=\left\{z \ \mid \mathcal{L}[z]= a_3z{'''}=0\right\}=\text{Span}\left\{1, x, x^2\right\}$ is invariant space admitted by $\mathcal{H}_2[u,\bar{u}]$ if $D(u)=b_1 u+b_0$ and $R(u,\bar{u})=c_1u+\delta\bar{u}+c_0$.\\

%

\noindent\textbf{Trigonometric subspaces}\\
\noindent\textbf{2-D subspace:} 
In this case, we assume $a_5=a_4=a_3=a_1=0$, $a_2=1$ and $a_0\in\mathbb{R}$ in \eqref{in2}.
Thus $\mathcal{H}_2[u,\bar{u}]$ admits the two-dimensional trigonometric invariant subspace $\mathcal{W}_2=\left\{z \ \mid \mathcal{L}[z]= z{''}+a_0z=0\right\}=\text{Span}\left\{\cos{(\sqrt{a_0}x)},\sin{(\sqrt{a_0}x)}\right\}$ for the following cases:\\
 (i) $D(u)=b_2u^2+b_1u+b_0$ and $R(u,\bar{u})=a_0b_2u^3+b_1a_0u^2+c_1u+\delta\bar{u}$.\\
(ii) $D(u)=b_1u+b_0$ and $R(u,\bar{u})=b_1a_0u^2+c_1u+\delta\bar{u}$.

\noindent\textbf{3-D subspace:} 
For $a_5=a_4=a_2=a_0=0$, $a_1\in\mathbb{R}$ and $a_3=1$ in \eqref{in2}, the three-dimensional trigonometric invariant subspace is $\mathcal{W}_3=\left\{z \ \mid \mathcal{L}[z]= z{'''}+a_1z{'}=0\right\}=\text{Span}\left\{1,\cos{(\sqrt{a_0}x)},\sin{(\sqrt{a_0}x)}\right\}$ which is admitted by the differential operator\\ $\mathcal{H}_2[u,\bar{u}]=(b_1 u+b_0)u_{xx}+b_1a_1 u^2+c_1u+\delta\bar{u}+c_0$.

%
\subsection{Invariant subspaces corresponding to linear differential operators $\mathcal{H}_{1}[u,\bar{u}]$ and $\mathcal{H}_{2}[u,\bar{u}]$}
In this subsection, we discuss the invariant subspaces corresponding to the linear differential operators $\mathcal{H}_1[u,\bar{u}]$ and $\mathcal{H}_2[u,\bar{u}]$ for the time-fractional delay reaction-diffusion equations \eqref{crd} and \eqref{qrd}, respectively.\\
\textbf{Case (i):} If $a_0=0$ and $a_i\in \mathbb{R},\ i=1,\dots,5,$ then  the linear differential operators
\begin{equation*}
\mathcal{H}_1[u,\bar{u}]=\mathcal{H}_2[u,\bar{u}]=b_0u_{xx}+c_1u+\delta\bar{u}+c_0,
\end{equation*}
where $D(u)=b_0$ and $R(u,\bar{u})=c_1u+\delta\bar{u}+c_0$, admit the following invariant subspaces:
\begin{itemize}
\item[1.] $\mathcal{W}_{n+1}=\text{Span}\left\{1,x,\dots,x^{n}\right\}$.
\item[2.] $\mathcal{W}_{n+1}=\text{Span}\left\{1,e^{\nu_1x},\dots,e^{\nu_n x}\right\}$.
\item[3.] $\mathcal{W}_{n+1}=\text{Span}\left\{1,\cos{(\kappa_1x)},\sin{(\omega_1x)},\dots,\cos{(\kappa_{\frac{n}{2}} x)},\sin{(\omega_{\frac{n}{2}} x)}\right\}$.
\item[4.] $\mathcal{W}_{2n+1}=\text{Span}\left\{1,x,\dots,x^{n},e^{\nu_1x},\dots,e^{\nu_n x}\right\}$.
\item[5.] $\mathcal{W}_{2n+1}=\text{Span}\left\{1,x,\dots,x^{n},\cos{(\kappa_1x)},\sin{(\omega_1x)},\dots,\cos{(\kappa_{\frac{n}{2}} x)},\sin{(\omega_{\frac{n}{2}} x)}\right\}$.
\item[6.] $\mathcal{W}_{2n+1}=\text{Span}\left\{1,e^{\nu_1x},\dots,e^{\nu_n x},\cos{(\kappa_1x)},\sin{(\omega_1x)},\dots,\cos{(\kappa_{\frac{n}{2}} x)},\sin{(\omega_{\frac{n}{2}} x)}\right\}$.
\item[7.] $\mathcal{W}_{3n+1}=\text{Span}\left\{1,x,\dots,x^{n},e^{\nu_1x},\dots,e^{\nu_n x},\cos{(\kappa_1x)},\sin{(\omega_1x)},\dots,\cos{(\kappa_{\frac{n}{2}} x)},\sin{(\omega_{\frac{n}{2}} x)}\right\}$.
\item[8.]  $\mathcal{W}_{n+1}=\text{Span}\left\{1,e^{\mu_1 x}\cos{(\kappa_1x)},e^{\mu_{1}x}\sin{(\omega_1x)},\dots,e^{\mu_{\frac{n}{2}}x}\cos{(\kappa_{\frac{n}{2}} x)},e^{\mu_{\frac{n}{2}}x}\sin{(\omega_{\frac{n}{2}} x)}\right\}$.
\item[9.] $\mathcal{W}_{2n+1}=\text{Span}\left\{1,x,\dots,x^{n},e^{\mu_1 x}\cos{(\kappa_1x)},e^{\mu_{1}x}\sin{(\omega_1x)},\dots,e^{\mu_{\frac{n}{2}}x}\cos{(\kappa_{\frac{n}{2}} x)},e^{\mu_{\frac{n}{2}}x}\sin{(\omega_{\frac{n}{2}} x)}\right\}$.
\item[10.] $\mathcal{W}_{3n+1}=\text{Span}\left\{1,x,\dots,x^{n},e^{\nu_1x},\dots,e^{\nu_n x},e^{\mu_1 x}\cos{(\kappa_1x)},e^{\mu_{1}x}\sin{(\omega_1x)},\dots,\right.$\\
    $\left.\qquad\qquad\qquad \times\  e^{\mu_{\frac{n}{2}}x}\cos{(\kappa_{\frac{n}{2}} x)},e^{\mu_{\frac{n}{2}}x}\sin{(\omega_{\frac{n}{2}} x)}\right\}$.
\end{itemize}Here $n\in \mathbb{N}$ and $\nu_{i}, \mu_{i},\kappa_{i},\omega_{i}\in\mathbb{R}$.\\
\textbf{Case (ii):} If $a_0\neq0$ and $a_i\in \mathbb{R},\ i=1,\dots,5,$ we obtain $D(u)=b_0$ and $R(u,\bar{u})=c_1u+\delta\bar{u}$. The corresponding differential operators $\mathcal{H}_{1}[u,\bar{u}]$ and $\mathcal{H}_{2}[u,\bar{u}]$ admit the following invariant subspaces:
\begin{itemize}
\item[1.] $\mathcal{W}_{n}=\text{Span}\left\{x,\dots,x^{n}\right\}$.
\item[2.] $\mathcal{W}_{n}=\text{Span}\left\{e^{\nu_1x},\dots,e^{\nu_n x}\right\}$.
\item[3.] $\mathcal{W}_{n}=\text{Span}\left\{\cos{(\kappa_1x)},\sin{(\omega_1x)},\dots,\cos{(\kappa_{\frac{n}{2}} x)},\sin{(\omega_{\frac{n}{2}} x)}\right\}$.
\item[4.] $\mathcal{W}_{2n}=\text{Span}\left\{x,\dots,x^{n},e^{\nu_1x},\dots,e^{\nu_n x}\right\}$.
\item[5.] $\mathcal{W}_{2n}=\text{Span}\left\{x,\dots,x^{n},\cos{(\kappa_1x)},\sin{(\omega_1x)},\dots,\cos{(\kappa_{\frac{n}{2}} x)},\sin{(\omega_{\frac{n}{2}} x)}\right\}$.
\item[6.] $\mathcal{W}_{2n}=\text{Span}\left\{e^{\nu_1x},\dots,e^{\nu_n x},\cos{(\kappa_1x)},\sin{(\omega_1x)},\dots,\cos{(\kappa_{\frac{n}{2}} x)},\sin{(\omega_{\frac{n}{2}} x)}\right\}$.
\item[7.] $\mathcal{W}_{3n}=\text{Span}\left\{x,\dots,x^{n},e^{\nu_1x},\dots,e^{\nu_n x},\cos{(\kappa_1x)},\sin{(\omega_1x)},\dots,\cos{(\kappa_{\frac{n}{2}} x)},\sin{(\omega_{\frac{n}{2}} x)}\right\}$.
\item[8.]  $\mathcal{W}_{n}=\text{Span}\left\{e^{\mu_1}\cos{(\kappa_1x)},e^{\mu_{1}}\sin{(\omega_1x)},\dots,e^{\mu_{\frac{n}{2}}}\cos{(\kappa_{\frac{n}{2}} x)},e^{\mu_{\frac{n}{2}}}\sin{(\omega_{\frac{n}{2}} x)}\right\}$.
\item[9.] $\mathcal{W}_{2n}=\text{Span}\left\{x,\dots,x^{n},e^{\mu_1}\cos{(\kappa_1x)},e^{\mu_{1}}\sin{(\omega_1x)},\dots,e^{\mu_{\frac{n}{2}}}\cos{(\kappa_{\frac{n}{2}} x)},e^{\mu_{\frac{n}{2}}}\sin{(\omega_{\frac{n}{2}} x)}\right\}$.
\item[10.] $\mathcal{W}_{3n}=\text{Span}\left\{x,\dots,x^{n},e^{\nu_1x},\dots,e^{\nu_n x},e^{\mu_1 x}\cos{(\kappa_1x)},e^{\mu_{1}x}\sin{(\omega_1x)},\dots,e^{\mu_{\frac{n}{2}}x}\cos{(\kappa_{\frac{n}{2}} x)},\right.$
    $\left.\qquad\qquad\quad \times\ e^{\mu_{\frac{n}{2}}x}\sin{(\omega_{\frac{n}{2}} x)}\right\}$.
\end{itemize}Here $n\in \mathbb{N}$ and $\nu_{i}, \mu_{i},\kappa_{i},\omega_{i}\in\mathbb{R}$.\\
\section{Exact solutions for time-fractional RD equations with linear time delay}
\subsection{Construction of exact solutions for \eqref{crd}:}
\subsubsection{One-dimensional exponential solution}
Let $D(u)=b_nu^n+b_{n-1}u^{n-1}+\dots+b_1u+b_0$ and $R(u,\bar{u})=c_{n+1}u^{n+1}+c_nu^n+\dots+c_1u+\delta\bar{u}$. Thus, the time-fractional reaction-diffusion equation involving a linear term with time delay \eqref{crd} reduces to
\begin{eqnarray}
\begin{aligned}\label{crd2}
\dfrac{\partial^{\alpha}u}{\partial t^{\alpha}}=&\mathcal{H}_1[u,\bar{u}]=\left(b_nu^n+b_{n-1}u^{n-1}+\dots+b_1u+b_0\right)u_{xx}\\
&+\left(nb_nu^{n-1}+(n-1)b_{n-1}u^{n-2}+\dots+2b_2u+b_1\right)\left(u_x\right)^2\\
&+c_{n+1}u^{n+1}+c_{n}u^{n}+\dots+c_1u+\delta\bar{u},\ t>0,\ b_i,c_{i+1},\delta\in\mathbb{R},i=0,\dots,n,\ n\in\mathbb{N},
\end{aligned}
\end{eqnarray}along with the initial condition
\begin{equation}\label{4.2}
u=\Phi(x,t)=\psi(t)e^{a_0x},\ t\in [-\tau,0].
\end{equation}
 Let $\mathcal{W}_{1}=\text{Span}\left\{e^{a_0x}\right\}$, $a_0\in\mathbb{R}$. The linear exponential space $\mathcal{W}_{1}$ is invariant corresponding to $\mathcal{H}_1[u,\bar{u}]$ if $c_{k+1}=-(k+1)a_0^2b_i$, $k=1,2,\dots,n$, as
$\mathcal{H}_1\left[Ae^{a_0x}, \bar{A}e^{a_0x}\right]=\left(a_0^2b_0+c_1\right)Ae^{a_0x}+\delta\bar{A}e^{a_0x}\in\mathcal{W}_1$.
Thus an exact solution of time-fractional PDE \eqref{crd2} is of the form
\begin{equation}\label{crd3}
u(x,t)=A(t)e^{a_0x},\ a_0\in\mathbb{R},
\end{equation}
where unknown function $A(t)$ is to be determined by solving
\begin{equation}\label{l1}
\dfrac{d^{\alpha} A}{dt^{\alpha}}=\left(a_0^{2}b_{0}+c_1\right)A(t)+\delta A(t-\tau).
\end{equation}
Here the initial condition \eqref{4.2} implies that $A(t)=\psi(t)$, $t\in[-\tau,0]$.\\
Applying the Laplace transformation on both sides of equation \eqref{l1}, we have
\begin{align*}
s^{\alpha}\hat{A}(s)-s^{\alpha-1}\psi(0)&=\left(a_0^{2}b_{0}+c_1\right)\hat{A}(s)+\delta\mathrm{L}\left\{A(t-\tau)\right\},\\
s^{\alpha}\hat{A}(s)-s^{\alpha-1}\psi(0)&=\left(a_0^{2}b_{0}+c_1\right)\hat{A}(s)+\delta e^{-\tau s}\int\limits_{-\tau}^0 e^{-s\mu}\psi(\mu)d\mu+\delta e^{-\tau s}\hat{A}(s).
\end{align*}
By simplification, we obtain
\begin{equation}\label{l2}
\hat{A}(s)=\psi(0)\left(\dfrac{s^{\alpha-1}}{s^\alpha-\lambda-\delta e^{-\tau s}}\right)+\delta\left(\dfrac{e^{-\tau s}}{s^\alpha-\lambda-\delta e^{-\tau s}}\right)\int\limits_{-\tau}^0 e^{-s\mu}\psi(\mu)d\mu,\ \lambda=a_0^{2}b_{0}+c_1.
\end{equation}Taking inverse Laplace transformation and using convolution theorem in \eqref{l2}, we get
\begin{equation*}
A(t)=\psi(0)\mathrm{L}^{-1}\left\{\dfrac{s^{\alpha-1}}{s^\alpha-\lambda-\delta e^{-\tau s}}\right\}+\delta\mathrm{L}^{-1}\left\{\dfrac{1}{s^\alpha-\lambda-\delta e^{-\tau s}}\right\}\star\mathrm{L}^{-1}\left\{e^{-\tau s}\int\limits_{-\tau}^0 e^{-s\mu}\psi(\mu)d\mu\right\}.
\end{equation*}Consider
\begin{align*}
\mathrm{L}^{-1}\left\{\dfrac{s^{\alpha-1}}{s^\alpha-\lambda-\delta e^{-\tau s}}\right\}=&\mathrm{L}^{-1}\left\{\dfrac{s^{\alpha-1}}{\left(1-\dfrac{\delta e^{-s\tau}}{s^{\alpha}-\lambda}\right)(s^{\alpha}-\lambda)}\right\}\\
=&\mathrm{L}^{-1}\left\{\sum\limits_{n=0}^{\infty}\dfrac{\delta^n e^{-\tau ns}s^{\alpha-1}}{(s^{\alpha}-\lambda)^{n+1}}\right\},\ \Bigg| \frac{\delta e^{-s\tau}}{s^{\alpha}-\lambda}\Bigg|<1,\\
=&\sum\limits_{n=0}^{\infty}\delta^n H(t-n\tau)(t-n\tau)^{\alpha n}E_{\alpha,\alpha n+1}^{n+1}(\lambda(t-n\tau)^{\alpha}),
\end{align*}
where $H(t-n\tau)$ is a delayed unit step function.
Further we note
\begin{align*}
\mathrm{L}^{-1}\left\{\dfrac{1}{s^\alpha-\lambda-\delta e^{-\tau s}}\right\}=&\sum\limits_{n=0}^{\infty}\delta^n\mathrm{L}^{-1}\left\{\dfrac{ e^{-\tau ns}}{(s^{\alpha}-\lambda)^{n+1}}\right\},\  \Bigg| \frac{\delta  e^{-s\tau}}{s^{\alpha}-\lambda}\Bigg|<1,\\
=&\sum\limits_{n=0}^{\infty}\delta^n H(t-n\tau)(t-n\tau)^{\alpha n+\alpha-1}E_{\alpha,\alpha n+\alpha}^{n+1}(\lambda(t-n\tau)^{\alpha}).
\end{align*}
Finally, we compute $\mathrm{L}^{-1}\left\{e^{-\tau s}\int\limits_{-\tau}^0 e^{-s\mu}\psi(\mu)d\mu\right\}$.
Define $g(t):[-\tau, \infty)\mapsto[0,1]$ by
$$g(t)=\left\{
              \begin{array}{ll}
                0, & \hbox{if}\ t\geq0; \\
                1, & \hbox{if}\ t<0.
              \end{array}
            \right.
$$
Extending $\psi(t)$ from $[-\tau,0)$ to $[-\tau,\infty)$ by defining $\psi(t)=\psi(0)$ for $t\geq0$, then
\begin{align*}
\mathrm{L}^{-1}\left\{e^{-\tau s}\int\limits_{-\tau}^0 e^{-s\mu}\psi(\mu)d\mu\right\}
=&\mathrm{L}^{-1}\left\{\int\limits_{0}^\infty e^{-s\xi}\psi(-\tau+\xi)g(-\tau+\xi)d\xi\right\}\\
=&\mathrm{L}^{-1}\left\{\mathrm{L}\left\{\psi(-\tau+\xi)g(-\tau+\xi)\right\}\right\}=\psi(t-\tau)g(t-\tau).
\end{align*}
Thus Eq. \eqref{l2} becomes
\begin{align*}
A(t)=&\psi(0)\sum\limits_{n=0}^{\infty}\delta^n H(t-n\tau)(t-n\tau)^{\alpha n}E_{\alpha,\alpha n+1}^{n+1}(\lambda(t-n\tau)^{\alpha})\\
&+\left[\sum\limits_{n=0}^{\infty}\delta^{n+1} H(t-n\tau)(t-n\tau)^{\alpha n+\alpha-1}E_{\alpha,\alpha n+\alpha}^{n+1}(\lambda(t-n\tau)^{\alpha})\right]\star\left[\psi(t-\tau)g(t-\tau)\right]\\
=&\psi(0)\sum\limits_{n=0}^{\floor*{\frac{t}{\tau}}}\delta^n (t-n\tau)^{\alpha n}E_{\alpha,\alpha n+1}^{n+1}(\lambda(t-n\tau)^{\alpha})\\
&+\left[\sum\limits_{n=0}^{\floor*{\frac{t}{\tau}}}\delta^{n+1}(t-n\tau)^{\alpha n+\alpha-1}E_{\alpha,\alpha n+\alpha}^{n+1}(\lambda(t-n\tau)^{\alpha})\right]\star\left[\psi(t-\tau)g(t-\tau)\right].
\end{align*}
Hence exact solution of generalized time-fractional RD equation with time delay \eqref{crd2} corresponding to 1-dimensional exponential subspace is
\begin{align*}
u(x,t)=\ &\psi(0)\left[\sum\limits_{n=0}^{\floor*{\frac{t}{\tau}}}\delta^n (t-n\tau)^{\alpha n}E_{\alpha,\alpha n+1}^{n+1}(\lambda(t-n\tau)^{\alpha})+\sum\limits_{n=0}^{\floor*{\frac{t}{\tau}}}\delta^{n+1}\int\limits^t_0 (r-n\tau)^{\alpha n+\alpha-1}\right.\\
&\times E_{\alpha,\alpha n+\alpha}^{n+1}(\lambda(r-n\tau)^{\alpha})\psi(t-\tau-r)g(t-\tau-r)dr\Bigg]e^{a_0x},
\end{align*}
where $A(t)=\psi(t)$, $t\in[-\tau,0]$ and $\lambda=a_0^2b_0+c_1$, $a_0,b_0,c_1,\delta\in\mathbb{R}$.
\subsubsection{Two-dimensional polynomial solution}
Consider the polynomial subspace $\mathcal{W}_2=\text{Span}\left\{1,x\right\}$ along with $D(u)=b_0$ and $R(u,\bar{u})=c_1u+\delta\bar{u}+c_0$, $b_0,c_0,\delta,c_1\in\mathbb{R}$.\\
The initial condition here reads $u(x,t)=\Phi(x,t)=\psi(t)+\phi(t)x,\ -\tau\leq t\leq 0.$\\
Here, the polynomial exact solution for \eqref{crd} is
\begin{equation}
u(x,t)=A_1(t)+A_2(t)x,
\end{equation}
where $A_1(t)$ and $A_2(t)$ satisfy the following system of linear fractional delay ODEs
\begin{eqnarray}
&&\label{po1}\dfrac{d^{\alpha}A_1}{dt^{\alpha}}=c_1A_1(t)+\delta A_1(t-\tau)+c_0,\\
&&\label{po2}\dfrac{d^{\alpha}A_2}{dt^{\alpha}}=c_1A_2(t)+\delta A_2(t-\tau).
\end{eqnarray}
Applying the Laplace and inverse Laplace transform along with convolution theorem to \eqref{po1}, and proceeding the above similar procedure, we get
\begin{equation*}
\begin{aligned}
A_1(t)=&\psi(0)\mathrm{L}^{-1}\left\{\dfrac{s^{\alpha-1}}{s^\alpha-c_1-\delta e^{-\tau s}}\right\}+\mathrm{L}^{-1}\left\{\dfrac{c_0s^{-1}}{s^\alpha-c_1-\delta e^{-\tau s}}\right\}\\
&+\delta \mathrm{L}^{-1}\left\{\dfrac{1}{s^\alpha-c_1-\delta e^{-\tau s}}\right\}\star\mathrm{L}^{-1}\left\{e^{-\tau s}\int\limits_{-\tau}^0 e^{-s\mu}\psi(\mu)d\mu\right\}.
\end{aligned}
\end{equation*}
Consider
\begin{eqnarray}
\begin{aligned}
\mathrm{L}^{-1}\left\{\dfrac{c_0s^{-1}}{s^\alpha-c_1-\delta e^{-\tau s}}\right\}=&\mathrm{L}^{-1}\left\{\dfrac{c_0s^{-1}}{\left(1-\dfrac{\delta e^{-s\tau}}{s^{\alpha}-c_1}\right)(s^{\alpha}-c_1)}\right\},\ \Bigg| \dfrac{\delta e^{-s\tau}}{s^{\alpha}-c_1}\Bigg|<1, \\
=&\sum\limits_{n=0}^{\infty}c_0\delta^n H(t-n\tau)(t-n\tau)^{\alpha (n+1)}E_{\alpha,\alpha n+\alpha+1}^{n+1}(c_1(t-n\tau)^{\alpha}).
\end{aligned}
\end{eqnarray}
Thus, we obtain
\begin{align*}
A_1(t)=&\psi(0)\sum\limits_{n=0}^{\floor*{\frac{t}{\tau}}}\delta^n (t-n\tau)^{\alpha n}E_{\alpha,\alpha n+1}^{n+1}(c_1(t-n\tau)^{\alpha})\\
&+\left[\sum\limits_{n=0}^{\floor*{\frac{t}{\tau}}}\delta^{n+1} (t-n\tau)^{\alpha n+\alpha-1}E_{\alpha,\alpha n+\alpha}^{n+1}(c_1(t-n\tau)^{\alpha})\right]\star\left[\psi(t-\tau)g(t-\tau)\right]\\
&+\sum\limits_{n=0}^{\floor*{\frac{t}{\tau}}}c_0\delta^n (t-n\tau)^{\alpha (n+1)}E_{\alpha,\alpha n+\alpha+1}^{n+1}(c_1(t-n\tau)^{\alpha}).
\end{align*}
Similarly, we compute
\begin{align*}
A_2(t)=&\phi(0)\sum\limits_{n=0}^{\floor*{\frac{t}{\tau}}}\delta^n (t-n\tau)^{\alpha n}E_{\alpha,\alpha n+1}^{n+1}(c_1(t-n\tau)^{\alpha})\\
&+\left[\sum\limits_{n=0}^{\floor*{\frac{t}{\tau}}}\delta^{n+1}(t-n\tau)^{\alpha n+\alpha-1}E_{\alpha,\alpha n+\alpha}^{n+1}(c_1(t-n\tau)^{\alpha})\right]\star\left[\phi(t-\tau)g(t-\tau)\right].
\end{align*}
Hence, we obtain an exact solution of time-fractional reaction-diffusion equation with time delay \eqref{crd} along with $D(u)=b_0$ and $R(u,\bar{u})=c_1u+\delta\bar{u}+c_0$, as
\begin{align*}
u(x,t)=&\psi(0)\sum\limits_{n=0}^{\floor*{\frac{t}{\tau}}}\delta^n (t-n\tau)^{\alpha n}E_{\alpha,\alpha n+1}^{n+1}(c_1(t-n\tau)^{\alpha})+\sum\limits_{n=0}^{\floor*{\frac{t}{\tau}}}\delta^{n+1}\int\limits^t_0 (r-n\tau)^{\alpha n+\alpha-1}\\
&\times E_{\alpha,\alpha n+\alpha}^{n+1}(c_1(r-n\tau)^{\alpha})\psi(t-\tau-r)g(t-\tau-r)dr+\sum\limits_{n=0}^{\floor*{\frac{t}{\tau}}}c_0\delta^n (t-n\tau)^{\alpha (n+1)}\\
&\times E_{\alpha,\alpha n+\alpha+1}^{n+1}(c_1(t-n\tau)^{\alpha})+\left[\phi(0)\sum\limits_{n=0}^{\floor*{\frac{t}{\tau}}}\delta^n (t-n\tau)^{\alpha n}E_{\alpha,\alpha n+1}^{n+1}(c_1(t-n\tau)^{\alpha})\right.\\
&\left.+ \sum\limits_{n=0}^{\floor*{\frac{t}{\tau}}}\delta^{n+1}\int\limits^t_0 (r-n\tau)^{\alpha n+\alpha-1}E_{\alpha,\alpha n+\alpha}^{n+1}(c_1(r-n\tau)^{\alpha})\phi(t-\tau-r)g(t-\tau-r)dr\right]x,
\end{align*}
where $A_1(t)=\psi(t)$ and $A_2(t)=\phi(t)$, $t\in[-\tau,0]$.
\subsubsection{Three-dimensional trigonometric solution}
Consider the following invariant space $$\mathcal{W}_3=\ \text{Span}\left\{1,\cos{(\sqrt{a_1}x)},\sin{(\sqrt{a_1}x)}\right\}$$ admitted by $\mathcal{H}_1[u,\bar{u}]=b_0u_{xx}+c_1u+\delta\bar{u}+c_0$ along with the initial condition $u(x,t)=\psi(t)+\phi(t)\cos(\sqrt{a_1}x)+\eta(t)\sin(\sqrt{a_1}x)$. Following the above similar procedure, we find an analytical solution of \eqref{crd} corresponding to $\mathcal{H}_1[u,\bar{u}]$ as follows
\begin{align*}
u(x,t)&=A_1(t)+A_2(t)\cos{(\sqrt{a_1}x)}+A_3(t)\sin{(\sqrt{a_1}x)}\\
&=\psi(0)\sum\limits_{n=0}^{\floor*{\frac{t}{\tau}}}\delta^n (t-n\tau)^{\alpha n}E_{\alpha,\alpha n+1}^{n+1}(c_1(t-n\tau)^{\alpha})+\sum\limits_{n=0}^{\floor*{\frac{t}{\tau}}}\delta^{n+1}\int\limits^t_0 (r-n\tau)^{\alpha n+\alpha-1}\\
&\times E_{\alpha,\alpha n+\alpha}^{n+1}(c_1(r-n\tau)^{\alpha})\psi(t-\tau-r)g(t-\tau-r)dr+\sum\limits_{n=0}^{\floor*{\frac{t}{\tau}}}c_0\delta^n (t-n\tau)^{\alpha (n+1)}\\
&\times E_{\alpha,\alpha n+\alpha+1}^{n+1}(c_1(t-n\tau)^{\alpha})+\left[\phi(0)\sum\limits_{n=0}^{\floor*{\frac{t}{\tau}}}\delta^{n} (t-n\tau)^{\alpha n}E_{\alpha,\alpha n+1}^{n+1}(\gamma(t-n\tau)^{\alpha})\right.\\
&\left.+\sum\limits_{n=0}^{\floor*{\frac{t}{\tau}}}\delta^{n+1}\int\limits^t_0 (r-n\tau)^{\alpha n+\alpha-1}E_{\alpha,\alpha n+\alpha}^{n+1}(\gamma(r-n\tau)^{\alpha})\phi(t-\tau-r)g(t-\tau-r)dr\right]\\
&\times\cos{(\sqrt{a_1}x)}+\left[\eta(0)\sum\limits_{n=0}^{\floor*{\frac{t}{\tau}}}\delta^n (t-n\tau)^{\alpha n}E_{\alpha,\alpha n+1}^{n+1}(\gamma(t-n\tau)^{\alpha})+ \sum\limits_{n=0}^{\floor*{\frac{t}{\tau}}}\delta^{n+1}\right.\\
&\left.\int\limits^t_0 (r-n\tau)^{\alpha n+\alpha-1}E_{\alpha,\alpha n+\alpha}^{n+1}(\gamma(r-n\tau)^{\alpha})\eta(t-\tau-r)g(t-\tau-r)dr\right]\sin{(\sqrt{a_1}x)},
\end{align*}
where $A_1(t)=\psi(t)$, $A_2(t)=\phi(t)$, $A_3(t)=\eta(t)$, $t\in[-\tau,0]$, and $\gamma=c_1-a_1b_0$.

\subsection{Construction of exact solutions for \eqref{qrd}:}
\subsubsection{One-dimensional exponential solution}
Consider an exponential subspace $\mathcal{W}_{1}=\text{Span}\left\{e^{-a_0x}\right\}$ of dimension one along with $D(u)=b_nu^n+b_{n-1}u^{n-1}+\dots+b_1u+b_0$ and $R(u,\bar{u})=c_{n+1}u^{n+1}+c_{n}u^{n}+\dots+c_1u+\delta\bar{u}$. Thus, the time-fractional heat equation with linear term involving time delay \eqref{qrd} reduces to
\begin{eqnarray}\label{p0}
\begin{aligned}
\dfrac{\partial^{\alpha}u}{\partial t^{\alpha}}=\mathcal{H}_2[u,\bar{u}]=&\left(b_nu^n+b_{n-1}u^{n-1}+\dots+b_1u+b_0\right)\dfrac{\partial^2u}{\partial x^2}\\
&+c_{n+1}u^{n+1}+c_{n}u^{n}+\dots+c_1u+\delta\bar{u},\ t>0,\ \alpha\in(0,1].
\end{aligned}
\end{eqnarray}along with the initial condition $u(x,t)=\Psi(x,t)=e^{-a_0x}\xi(t),\, t\in[-\tau,0].$\\
The linear space $\mathcal{W}_1=\text{Span}\left\{e^{-a_0 x}\right\}$ is admitted by $\mathcal{H}_2[u,\bar{u}]$ if $c_{k+1}=-a_0^2b_k$, $k=1,\dots,m, m\in\mathbb{N}$.
Thus, we find an exact solution of \eqref{p0} corresponding to space $\mathcal{W}_1$ as
\begin{align*}
u(x,t)=A(t)e^{-a_0x}=&\left[\xi(0)\sum\limits_{n=0}^{\floor*{\frac{t}{\tau}}}\delta^n (t-n\tau)^{\alpha n}E_{\alpha,\alpha n+1}^{n+1}(\beta(t-n\tau)^{\alpha})+\sum\limits_{n=0}^{\floor*{\frac{t}{\tau}}}\delta^{n+1}\times\right.\\
&\int\limits^t_0 (r-n\tau)^{\alpha n+\alpha-1} E_{\alpha,\alpha n+\alpha}^{n+1}(\beta(r-n\tau)^{\alpha})\xi(t-\tau-r)g(t-\tau-r)dr\Bigg]e^{-a_0x},
\end{align*}
where $A(t)=\xi(t)$, $t\in[-\tau,0]$.
\subsubsection{Two-dimensional polynomial solution}
Consider $\mathcal{W}_1=\text{Span}\left\{1,x\right\}$ with respect to the following time-fractional heat equation with time delay \eqref{qrd}
\begin{align}\label{p1}
\begin{aligned}
&\dfrac{\partial^{\alpha}u}{\partial t^{\alpha}}=D(u)u_{xx}+c_1u+\delta\bar{u}+c_0,\ t>0,\ \alpha\in(0,1],\\
&u(x,t)=\Psi(x,t)=\psi(t)+\chi(t) x,\, t\in[-\tau,0],
\end{aligned}
\end{align}where $D(u)$ is an arbitrary and $R(u,\bar{u})=c_1u+\delta\bar{u}+c_0$, $c_1,\delta,c_0\in\mathbb{R}$.
Thus delay PDE \eqref{p1} admits an exact polynomial solution $
u(x,t)=A_1(t)+A_2(t)x,$
where $A_1(t)$ and $A_2(t)$ are:
\begin{align*}
A_1(t)=&\psi(0)\sum\limits_{n=0}^{\floor*{\frac{t}{\tau}}}\delta^n (t-n\tau)^{\alpha n}E_{\alpha,\alpha n+1}^{n+1}(c_1(t-n\tau)^{\alpha})\\
&+\left[\sum\limits_{n=0}^{\floor*{\frac{t}{\tau}}}\delta^{n+1}(t-n\tau)^{\alpha n+\alpha-1}E_{\alpha,\alpha n+\alpha}^{n+1}(c_1(t-n\tau)^{\alpha})\right]\star\left[\psi(t-\tau)g(t-\tau)\right]\\
&+\sum\limits_{n=0}^{\floor*{\frac{t}{\tau}}}c_0\delta^n (t-n\tau)^{\alpha (n+1)}E_{\alpha,\alpha n+\alpha+1}^{n+1}(c_1(t-n\tau)^{\alpha}),\\
A_2(t)=&\chi(0)\sum\limits_{n=0}^{\floor*{\frac{t}{\tau}}}\delta^n (t-n\tau)^{\alpha n}E_{\alpha,\alpha n+1}^{n+1}(c_1(t-n\tau)^{\alpha})\\
&+\left[\sum\limits_{n=0}^{\floor*{\frac{t}{\tau}}}\delta^{n+1} (t-n\tau)^{\alpha n+\alpha-1}E_{\alpha,\alpha n+\alpha}^{n+1}(c_1(t-n\tau)^{\alpha})\right]\star\left[\chi(t-\tau)g(t-\tau)\right],
\end{align*}
where $A_1(t)=\psi(t)$ and $A_2(t)=\chi(t)$, $t\in[-\tau,0]$.
\subsubsection{Two-dimensional trigonometric solution}
For $D(u)=b_2u^{2}+b_1u+b_0$ and $R(u,\bar{u})=a_0b_2u^3+b_1a_0u^2+c_1u+\delta\bar{u}$, $b_i,a_0,c_1,\delta\in\mathbb{R}$, $i=0,1,2$, the non-linear time-fractional heat equation with source term involving delay \eqref{qrd} reduces to
\begin{eqnarray}\label{p2}
\begin{aligned}
&\dfrac{\partial^{\alpha}u}{\partial t^{\alpha}}=\mathcal{H}_2[u,\bar{u}]\equiv (b_2u^{2}+b_1u+b_0)u_{xx}+a_0b_2u^3+b_1a_0u^2+c_1u+\delta\bar{u},\ t>0,\ \alpha\in(0,1],\\
&u(x,t)=\Psi(x,t)=\chi(t)\cos(\sqrt{a_0}x)+\omega(t)\sin(\sqrt{a_0}x),\, t\in[-\tau,0].
\end{aligned}
\end{eqnarray}Eq. \eqref{p2} admits 2-dimensional invariant subspace $\mathcal{W}_2=\text{Span}\left\{\cos{(\sqrt{a_0}x)},\sin{(\sqrt{a_0}x)}\right\}$as
\begin{align*}
&\mathcal{H}_{2}[A_1\cos{(\sqrt{a_0}x)}+A_2\sin{(\sqrt{a_0}x)}, \bar{A}_1\cos{(\sqrt{a_0}x)}+\bar{A}_2\sin{(\sqrt{a_0}x)}]\\
&=\left[(c_1-a_0b_0)A_1+\delta\bar{A}_1\right]\cos{(\sqrt{a_0}x)}+\left[(c_1-a_0b_0)A_2+\delta\bar{A}_2\right]\sin{(\sqrt{a_0}x)}\in \mathcal{W}_2.
\end{align*}
Hence corresponding to $\mathcal{W}_2,$ we find an exact solution of non-linear time-fractional heat equation with linear source term involving time delay \eqref{p2} as
\begin{eqnarray}
\begin{aligned}
u(x,t)=&\left[\chi(0)\sum\limits_{n=0}^{\floor*{\frac{t}{\tau}}}\delta^n (t-n\tau)^{\alpha n}E_{\alpha,\alpha n+1}^{n+1}(\lambda(t-n\tau)^{\alpha})+\sum\limits_{n=0}^{\floor*{\frac{t}{\tau}}}\delta^{n+1}\int\limits^t_0 (r-n\tau)^{\alpha n+\alpha-1}\right.\\
&\times E_{\alpha,\alpha n+\alpha}^{n+1}(\lambda(r-n\tau)^{\alpha})\chi(t-\tau-r)h(t-\tau-r)dr\Bigg]\cos{(\sqrt{a_0}x)}+\Bigg[\omega(0)\\
&\times \sum\limits_{n=0}^{\floor*{\frac{t}{\tau}}}\delta^n (t-n\tau)^{\alpha n}E_{\alpha,\alpha n+1}^{n+1}(\lambda(t-n\tau)^{\alpha})+\sum\limits_{n=0}^{\floor*{\frac{t}{\tau}}}\delta^{n+1}\int\limits^t_0 (r-n\tau)^{\alpha n+\alpha-1}\\
&\times E_{\alpha,\alpha n+\alpha}^{n+1}(\lambda(r-n\tau)^{\alpha})\omega(t-\tau-r)h(t-\tau-r)dr\Bigg]\sin{(\sqrt{a_0}x)},
\end{aligned}
\end{eqnarray}
where $A_1(t)=\chi(t)$ and $A_2(t)=\omega(t)$, $t\in[-\tau,0],$ and $\lambda=c_1-a_0b_0$.
\section{Generalizations}
\subsection{Extension to non-linear time-fractional PDEs involving the linear terms with multiple time delays}
Consider the non-linear time-fractional PDEs involving the linear terms with multiple/ several time delays for $\alpha>0$, having the form
\begin{align}\label{sev}
\begin{aligned}
&\dfrac{\partial^\alpha u}{\partial t^\alpha}=\mathcal{F}[u,\bar{u}_{i}]\equiv\mathcal{N}[u]+\sum\limits^{m}_{i=1}\delta_{i}\bar{u}_i,\ t> 0,\ m\in\mathbb{N},\\
&u(x,t)=\Omega(x,t),\ t\in[-\tau^*,0],\ \tau^*=\text{max}\{\tau_1,\dots,\tau_m\},
\end{aligned}
\end{align}
where $\ {u}=u(x,t),\ \bar{u}_{i}=u(x,t-\tau_i),\ \tau_i>0,\ \delta_i\in\mathbb{R}$. Here $\mathcal{N}[u]=N\left[x,u,u^{(1)},u^{(2)},\dots,u^{(k)}\right]$ is a non-linear differential operator of order $k\ (k\in\mathbb{N})$, where $u^{(r)}=\dfrac{\partial^r u}{\partial x^r},\ r=1,\dots,k,$
and $\tau_i>0$ $(i=1,2,\dots,m, m\in\mathbb{N})$.

The linear space \eqref{lin} is said to be invariant with respect to the non-linear differential operator $\mathcal{F}[u,\bar{u}_{i}]$ if $\mathcal{N}[\mathcal{W}_{n}]\subseteq \mathcal{W}_n$, \textit{i.e.,} $\mathcal{N}[u]\in \mathcal{W}_n$, for all $u\in \mathcal{W}_n$. If $\mathcal{W}_n$ is an invariant under $\mathcal{F}$, then the invariant condition of $\mathcal{F}$ reduces to the following form
\begin{equation}
\mathcal{L}\left(\mathcal{F}[u,\bar{u}_{i}]\right)\Big|_{\mathcal{L}(u)=0}=a_n\dfrac{d^n\mathcal{F}}{dx^n}+a_{n-1}\dfrac{d^{n-1}\mathcal{F}}{dx^{n-1}}+\dots+a_{1}\dfrac{d\mathcal{F}}{dx}+a_0\mathcal{F}\Big|_{\mathcal{L}(u)=0}=0,\ n\in\mathbb{N},
\end{equation}
where $\mathcal{F}[u,\bar{u}_{i}]=\mathcal{N}[u]+\sum\limits^{m}_{i=1}\delta_{i}\bar{u}_i$ and the constants $a_{n},\dots,a_0$ are to be determined. Then there exists $n$ functions $\Theta_j$ $(j=1,2,\dots,n)$ such that
\begin{equation}\label{ex2}
\mathcal{F}\left[\sum\limits^{n}_{j=1}A_j\varphi_j(x), \sum\limits^{n}_{j=1}\bar{A}_j\varphi_j(x)\right]= \sum\limits^{n}_{j=1}\Theta_j(A_1,\dots, A_n)\varphi_j(x)+\sum\limits^{m}_{i=1}\sum\limits^{n}_{j=1}\delta_{i}\bar{A}_j(t)\varphi_j(x).
\end{equation}
\begin{thm}
If the $n-$dimensional linear space \eqref{lin} is invariant under $\mathcal{F}[u,\bar{u}_i]$, then the non-linear time-fractional PDE with several time delays \eqref{sev} possesses generalized separable solutions of the form
\begin{equation}\label{ex1}
u(x,t)=\sum\limits^{n}_{j=1}A_j(t)\varphi_j(x),
\end{equation}
where the coefficients $A_j(t)$ satisfy the following system of fractional delay ODEs
\begin{align*}
&&\dfrac{d^\alpha A_j(t)}{dt^\alpha}=\Theta_j(A_1(t),\dots, A_n(t))+\sum\limits^{m}_{i=1}\delta_iA_j(t-\tau_i),\ j=1,\dots,n.
\end{align*}
\end{thm}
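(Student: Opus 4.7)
The plan is to verify the theorem by direct substitution of the separable ansatz into the delay PDE and then exploit linear independence of the basis $\{\varphi_j(x)\}_{j=1}^n$ to read off the system of fractional delay ODEs. This is the standard pattern used earlier in the paper for the single-delay setting (equations \eqref{gel}--\eqref{la}); the only new ingredient is keeping track of the finite sum $\sum_{i=1}^m \delta_i \bar{u}_i$ with its several distinct lags $\tau_i$.

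First I would take $u(x,t) = \sum_{j=1}^n A_j(t)\varphi_j(x)$ and compute the two sides of \eqref{sev} separately. On the left, since $\varphi_j$ depends only on $x$, the Caputo derivative acts only on the coefficients, giving
\begin{equation*}
\frac{\partial^\alpha u}{\partial t^\alpha} = \sum_{j=1}^n \frac{d^\alpha A_j(t)}{dt^\alpha}\,\varphi_j(x).
\end{equation*}
On the right, the delay terms yield $\bar{u}_i = u(x,t-\tau_i) = \sum_{j=1}^n A_j(t-\tau_i)\varphi_j(x)$, so that identifying $\bar{A}_j$ with $A_j(t-\tau_i)$ in the invariance identity \eqref{ex2} produces
\begin{equation*}
\mathcal{F}[u,\bar{u}_i] = \sum_{j=1}^n \Theta_j(A_1(t),\dots,A_n(t))\,\varphi_j(x) + \sum_{i=1}^m \sum_{j=1}^n \delta_i A_j(t-\tau_i)\,\varphi_j(x).
\end{equation*}
Here the hypothesis that $\mathcal{W}_n$ is invariant under $\mathcal{F}$ is precisely what guarantees that the nonlinear piece $\mathcal{N}[u]$ stays in $\text{Span}\{\varphi_1,\dots,\varphi_n\}$ with coefficients $\Theta_j$ depending only on $(A_1,\dots,A_n)$.

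Next I would equate the two expansions coefficient by coefficient. Because $\{\varphi_1(x),\dots,\varphi_n(x)\}$ is a fundamental solution set of the linear ODE $\mathcal{L}[z]=0$, these functions are linearly independent over $\mathbb{R}$, so matching the coefficient of each $\varphi_j(x)$ forces
\begin{equation*}
\frac{d^\alpha A_j(t)}{dt^\alpha} = \Theta_j(A_1(t),\dots,A_n(t)) + \sum_{i=1}^m \delta_i A_j(t-\tau_i), \qquad j=1,\dots,n,
\end{equation*}
which is exactly the claimed system of fractional delay ODEs. The history conditions $A_j(t)$ on $[-\tau^*,0]$ are recovered by expanding the initial profile $\Omega(x,t)$ on $\mathcal{W}_n$.

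The only step that requires genuine care, and which I expect to be the main (mild) obstacle, is justifying the substitution $\bar{A}_j \leftrightarrow A_j(t-\tau_i)$ uniformly across the $m$ delays in \eqref{ex2}: the identity \eqref{ex2} was formulated for a single shifted argument, but its derivation relies only on the fact that evaluation of $\mathcal{F}$ at $(u,\bar{u})\in\mathcal{W}_n\times\mathcal{W}_n$ lands in $\mathcal{W}_n$ linearly in the second slot with multiplier $\delta$. Since the delay contribution in \eqref{sev} enters purely linearly as $\sum_{i=1}^m \delta_i \bar{u}_i$, superposition in the delayed variable is legitimate, and the $m$ copies decouple cleanly into the double sum above. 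Once this linearity-in-delay observation is made explicit, the rest of the argument reduces to the routine one-delay computation of \eqref{la}.
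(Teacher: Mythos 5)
Your proposal is correct and follows essentially the same route as the paper's proof: apply the Caputo derivative to the separable ansatz so it acts only on the coefficients $A_j(t)$, invoke the invariance identity \eqref{ex2} with $\bar{A}_j$ replaced by $A_j(t-\tau_i)$, substitute into \eqref{sev}, and conclude by linear independence of the $\varphi_j$. The concern you flag about extending \eqref{ex2} to several delays is already handled in the paper, since \eqref{ex2} is stated with the double sum $\sum_{i}\sum_{j}\delta_i\bar{A}_j\varphi_j$ from the outset.
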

\begin{proof}
Calculating Caputo fractional derivative of order $\alpha$ to \eqref{ex1} with respect to variable $t$, we obtain
\begin{equation}\label{ex4}
\dfrac{\partial^{\alpha}u(x,t)}{\partial t^{\alpha}}=\sum\limits^{n}_{j=1}\left[\dfrac{d^{\alpha}A_j(t)}{dt^\alpha}\right]\varphi_j(x).
\end{equation}
Since the linear space $\mathcal{W}_n$ is invariant under $\mathcal{F}$, in view of \eqref{ex2}, we have
\begin{eqnarray}\label{ex3}
\begin{aligned}
\mathcal{F}[u,\bar{u}_{i}]=&\mathcal{F}\left[\sum\limits^{n}_{j=1}A_j(t)\varphi_j(x), \sum\limits^{n}_{j=1}{A}_j(t-\tau_i)\varphi_j(x)\right]\\
= &\sum\limits^{n}_{j=1}\Theta_j(A_1(t),\dots, A_n(t))\varphi_j(x)+\sum\limits^{m}_{i=1}\sum\limits^{n}_{j=1}\delta_{i}{A}_j(t-\tau_i)\varphi_j(x).
\end{aligned}
\end{eqnarray}
Substituting \eqref{ex4} and \eqref{ex3} in non-linear time-fractional delay PDE \eqref{sev}, we get
\begin{equation}\label{ex5}
\sum\limits^{n}_{j=1}\left[\dfrac{d^\alpha A_j(t)}{dt^\alpha}-\Theta_j(A_1(t),\dots, A_n(t))-\sum\limits^{r}_{i=1}\delta_i{A}_j(t-\tau_i)\right]\varphi_j(x)=0,\ j=1,\dots,n.
\end{equation}
Using the linear independence of function $\varphi_j$'s,\ $j=1,\dots,n$, Eq. \eqref{ex5} yields
$$\dfrac{d^\alpha A_j(t)}{dt^\alpha}=\Theta_j(A_1(t),\dots, A_n(t))+\sum\limits^{r}_{i=1}\delta_i{A}_j(t-\tau_i),\ j=1,\dots,n,\ n\in\mathbb{N}.$$
\end{proof}

\subsubsection{Exact solution for time-fractional heat equation with two time delays}
 Consider the non-linear time-fractional heat equation with source term involving two time delays having the form
\begin{eqnarray}\label{pp2}
\begin{aligned}
&\dfrac{\partial^{\alpha}u}{\partial t^{\alpha}}=\mathcal{F}[u,\bar{u}_i]\equiv (b_2u^{2}+b_1u+b_0)u_{xx}+a_0b_2u^3+b_1a_0u^2+c_1u+\delta_1\bar{u}_1+\delta_2\bar{u}_2,\ t> 0\\
&u(x,t)=\Omega(x,t)=\psi_1(t)\cos{(\sqrt{a_0}x)}+\psi_2(t)\sin{(\sqrt{a_0}x)},\ t\in[-\tau^*,0],
\end{aligned}
\end{eqnarray}
where $\ \alpha\in(0,1],\ \bar{u}_1=u(x,t-\tau_1)$ and $\bar{u}_2=u(x,t-\tau_2),\ \tau^*=\text{max}\{\tau_1,\tau_2\}$.\\
Eq. \eqref{pp2} admits two-dimensional trigonometric subspace $\mathcal{W}_2=\text{Span}\left\{\cos{(\sqrt{a_0}x)},\sin{(\sqrt{a_0}x)}\right\}$.
Thus an exact solution of time-fractional heat equation \eqref{pp2} is of the form
\begin{equation}\label{pp3}
u(x,t)=A_{1}(t)\cos{(\sqrt{a_0}x)}+A_{2}(t)\sin{(\sqrt{a_0}x)},
\end{equation}
where $A_{1}(t)$ and $A_{2}(t)$ are the coefficients which are determined by solving the following linear fractional delay ODEs
\begin{align}
\begin{aligned}\label{p4}
&\dfrac{d^{\alpha}A_{1}}{dt^{\alpha}}=(c_1-a_0b_0)A_{1}(t)+\delta_1A_{1}(t-\tau_1)+\delta_2A_{1}(t-\tau_2),\\
&\dfrac{d^{\alpha}A_{2}}{dt^{\alpha}}=(c_1-a_0b_0)A_{2}(t)+\delta_1A_{2}(t-\tau_1)+\delta_2A_{2}(t-\tau_2).
\end{aligned}
\end{align}
Here $A_{j}(t)=\psi_{j}(t)$, $t\in[-\tau^{\star},0]$, $j=1,2$, where $\tau^{\star}=\text{max}\left\{\tau_{1},\tau_{2}\right\}$.
Applying the Laplace transform on both sides of equations of the system \eqref{p4}, we have
\begin{align*}s^{\alpha}\hat{A}_{j}(s)-s^{\alpha-1}\psi_{j}(0)&=\left(c_1-a_0b_0\right)\hat{A}_{j}(s)+\delta_1\mathrm{L}\left\{A_{j}(t-\tau_1)\right\}+\delta_2\mathrm{L}\left\{A_{j}(t-\tau_2)\right\},\\
s^{\alpha}\hat{A}_{j}(s)-s^{\alpha-1}\psi_{j}(0)&=\left(c_1-a_0b_0\right)\hat{A}_{j}(s)+\delta_1e^{-\tau_1 s}\int\limits_{-\tau_1}^0 e^{-s\mu}\psi_j(\mu)d\mu+\delta_1e^{-\tau_1 s}\hat{A}_{j}(s)\\
&\ \ \ +\delta_2e^{-\tau_2 s}\int\limits_{-\tau_2}^0 e^{-s\mu}\psi_j(\mu)d\mu+\delta_2e^{-\tau_2 s}\hat{A}_{j}(s),
\end{align*}
which on simplification, takes the form
\begin{eqnarray}\label{pp9}
\begin{aligned}
\hat{A}_{j}(s)=&\psi_{j}(0)\left(\dfrac{s^{\alpha-1}}{s^\alpha-\gamma-(\delta_1e^{-\tau_1 s}+\delta_2e^{-\tau_2 s})}\right)+\delta_1\left(\dfrac{e^{-\tau_1 s}}{s^\alpha-\gamma-(\delta_1e^{-\tau_1 s}+\delta_2e^{-\tau_2 s})}\right)\\
&\int\limits_{-\tau_1}^0 e^{-s\mu}\psi_{j}(\mu)d\mu +\delta_2\left(\dfrac{e^{-\tau_2 s}}{s^\alpha-\gamma-(\delta_1e^{-\tau_1 s}+\delta_2e^{-\tau_2 s})}\right)\int\limits_{-\tau_2}^0 e^{-s\mu}\psi_{j}(\mu)d\mu,\ j=1,2,
\end{aligned}
\end{eqnarray}
where $\gamma=c_1-a_0b_{0}$.\\
Using inverse Laplace transform and convolution theorem in \eqref{pp9}, we get
\begin{align*}
A_{j}(t)=&\psi_{j}(0)\mathrm{L}^{-1}\left\{\dfrac{s^{\alpha-1}}{s^\alpha-\gamma-(\delta_1e^{-\tau_1 s}+\delta_2e^{-\tau_2 s})}\right\}\\
&+\delta_1\mathrm{L}^{-1}\left\{\dfrac{1}{s^\alpha-\gamma-(\delta_1e^{-\tau_1 s}+\delta_2e^{-\tau_2 s})}\right\}\star\mathrm{L}^{-1}\left\{e^{-\tau_1 s}\int\limits_{-\tau_1}^0 e^{-s\mu}\psi_{j}(\mu)d\mu\right\}\\
&+\delta_2\mathrm{L}^{-1}\left\{\dfrac{1}{s^\alpha-\gamma-(\delta_1e^{-\tau_1 s}+\delta_2e^{-\tau_2 s})}\right\}\star\mathrm{L}^{-1}\left\{e^{-\tau_2 s}\int\limits_{-\tau_2}^0 e^{-s\mu}\psi_{j}(\mu)d\mu\right\}.
\end{align*}
First consider
\begin{align*}
&\mathrm{L}^{-1}\left\{\dfrac{s^{\alpha-1}}{s^\alpha-\gamma-(\delta_1e^{-\tau_1 s}+\delta_2e^{-\tau_2 s})}\right\}=\ \mathrm{L}^{-1}\left\{\dfrac{s^{\alpha-1}}{\left(1-\dfrac{\delta_1e^{-\tau_1 s}+\delta_2e^{-\tau_2 s}}{s^{\alpha}-\gamma}\right)(s^{\alpha}-\gamma)}\right\}\\
=&\mathrm{L}^{-1}\left\{\dfrac{s^{\alpha-1}}{s^{\alpha}-\gamma}\left(\sum\limits_{n=0}^{\infty}\dfrac{(\delta_1e^{-\tau_1 s}+\delta_2e^{-\tau_2 s})^n}{(s^{\alpha}-\gamma)^n}\right)\right\},\ \Bigg|\dfrac{\delta_1e^{-\tau_1 s}+\delta_2e^{-\tau_2 s}}{s^{\alpha}-\gamma}\Bigg|<1\\
=&\sum\limits_{n=0}^{\infty}\sum\limits^n_{m=0}\delta_1^{n-m}\delta_2^m\binom{n}{m}\mathrm{L}^{-1}\left\{\dfrac{e^{-s((n-m)\tau_1+m\tau_2)}s^{\alpha-1}}{(s^{\alpha}-\gamma)^{n+1}}\right\}\\
=&\sum\limits_{n=0}^{\infty}\sum\limits^n_{m=0}\delta_1^{n-m}\delta_2^m\binom{n}{m}H\left[t-\left((n-m)\tau_1+m\tau_2\right)\right]\mathrm{L}^{-1}\left\{\dfrac{s^{\alpha-1}}{(s^{\alpha}-\gamma)^{n+1}}\right\}\left(t-\left((n-m)\tau_1+m\tau_2\right)\right)\\
=&\sum\limits_{n=0}^{\infty}\sum\limits^n_{m=0}\delta_1^{n-m}\delta_2^m\binom{n}{m}\left(t-\left((n-m)\tau_1+m\tau_2\right)\right)^{\alpha n}H\left[t-\left((n-m)\tau_1+m\tau_2\right)\right]\\
&\times E_{\alpha,\alpha n+1}^{n+1}(\gamma\left(t-\left((n-m)\tau_1+m\tau_2\right)\right)^{\alpha}).
\end{align*}
Similarly
\begin{align*}
&\mathrm{L}^{-1}\left\{\dfrac{1}{s^\alpha-\gamma-(\delta_1e^{-\tau_1 s}+\delta_2e^{-\tau_2 s})}\right\}=\sum\limits_{n=0}^{\infty}\sum\limits^n_{m=0}\delta_1^{n-m}\delta_2^m\binom{n}{m}\mathrm{L}^{-1}\left\{\dfrac{e^{-s((n-m)\tau_1+m\tau_2)}}{(s^{\alpha}-\gamma)^{n+1}}\right\}\\
=&\sum\limits_{n=0}^{\infty}\sum\limits^n_{m=0}\delta_1^{n-m}\delta_2^m\binom{n}{m}H\left[t-\left((n-m)\tau_1+m\tau_2\right)\right]\left(t-\left((n-m)\tau_1+m\tau_2\right)\right)^{\alpha n+\alpha-1}\\
&\times E_{\alpha,\alpha n+\alpha}^{n+1}(\gamma\left(t-\left((n-m)\tau_1+m\tau_2\right)\right)^{\alpha}).
\end{align*}
Finally, we compute $\mathrm{L}^{-1}\left\{e^{-\tau_i s}\int\limits_{-\tau_i}^0 e^{-s\mu}\psi_{j}(\mu)d\mu\right\}$, $i,j=1,2$.\\
Define $g_i(t):[-\tau_i, \infty)\mapsto[0,1]$ by
$$g_i(t)=\left\{
              \begin{array}{ll}
                0, & \hbox{if}\ t\geq0; \\
                1, & \hbox{if}\ t<0.
              \end{array}
            \right.
$$
The function $\psi_{j}(t)$ is extended to $[-\tau_i,\infty)$ by defining $\psi_{j}(t)=\psi_{j}(0)$ for $t\geq0$, then for $i,j=1,2$
\begin{align*}
\mathrm{L}^{-1}\left\{e^{-\tau_{i} s}\int\limits_{-\tau_{i}}^0 e^{-s\mu}\psi_{j}(\mu)d\mu\right\}
=&\mathrm{L}^{-1}\left\{\int\limits_{0}^\infty e^{-s\xi}\psi_{j}(-\tau_{i}+\xi)g_{i}(-\tau_{i}+\xi)d\xi\right\}\\
=&\mathrm{L}^{-1}\left\{\mathrm{L}\left\{\psi_{j}(-\tau_{i}+\xi)g_{i}(-\tau_{i}+\xi)\right\}\right\}=\psi_{j}(t-\tau_{i})g_{i}(t-\tau_{i}).
\end{align*}
Thus for $j=1,2,$
\begin{align*}
A_{j}(t)=&\psi_{j}(0)\sum\limits_{n=0}^{\infty}\sum\limits^n_{m=0}\delta_1^{n-m}\delta_2^m\binom{n}{m}H\left[t-\left((n-m)\tau_1+m\tau_2\right)\right]\left(t-\left((n-m)\tau_1+m\tau_2\right)\right)^{\alpha n}\\
&\times E_{\alpha,\alpha n+1}^{n+1}(\gamma\left(t-\left((n-m)\tau_1+m\tau_2\right)\right)^{\alpha})\\
&+\sum\limits_{n=0}^{\infty}\sum\limits^n_{m=0}\delta_1^{n-m+1}\delta_2^m\binom{n}{m}H\left[t-\left((n-m)\tau_1+m\tau_2\right)\right]\left(t-\left((n-m)\tau_1+m\tau_2\right)\right)^{\alpha n+\alpha-1}\\
&\times E_{\alpha,\alpha n+\alpha}^{n+1}(\gamma\left(t-\left((n-m)\tau_1+m\tau_2\right)\right)^{\alpha})
\star\left[\psi_{j}(t-\tau_1)g_{1}(t-\tau_1)\right]\\
&+\sum\limits_{n=0}^{\infty}\sum\limits^n_{m=0}\delta_1^{n-m}\delta_2^{m+1}\binom{n}{m}H\left[t-\left((n-m)\tau_1+m\tau_2\right)\right]\left(t-\left((n-m)\tau_1+m\tau_2\right)\right)^{\alpha n+\alpha-1}\\
&\times E_{\alpha,\alpha n+\alpha}^{n+1}(\gamma\left(t-\left((n-m)\tau_1+m\tau_2\right)\right)^{\alpha})
\star\left[\psi_{j}(t-\tau_2)g_{2}(t-\tau_2)\right].
\end{align*}
Note that $H\left[t-\left((n-m)\tau_1+m\tau_2\right)\right]=1$ if $t\geq \widetilde{\tau}\equiv(n-m)\tau_1+m\tau_2$. Thus \begin{align*}
t\geq \max\limits_{0\leq m \leq n} \widetilde{\tau}=\max\limits_{0\leq m \leq n}(n-m)\tau_1+m\tau_2=\left\{
\begin{array}{ll}  n\tau_1, & \tau_1>\tau_2,\\
n\tau_2 & \tau_2>\tau_1,
\end{array}
\right.
\end{align*}\textit{i.e.,} $t\geq n \tau^*,$ where $\tau^*=\text{max}\{\tau_1, \tau_2\}.$ This implies $n \leq {\floor*{\frac{t}{\tau^*}}}.$
Hence, we obtain an exact solution of non-linear time-fractional heat equation with source term involving two time delays \eqref{pp2} as
\begin{align*}
u(x,t)=&\left[\psi_1(0)\sum\limits_{n=0}^{{\floor*{\frac{t}{\tau^*}}}}\sum\limits^n_{m=0}\delta_1^{n-m}\delta_2^m\binom{n}{m}\left(t-\widetilde{\tau}\right)^{\alpha n}E_{\alpha,\alpha n+1}^{n+1}(\gamma\left(t-\widetilde{\tau}\right)^{\alpha})+\sum\limits_{n=0}^{{\floor*{\frac{t}{\tau^*}}}}\sum\limits^n_{m=0}\delta_1^{n-m+1}\delta_2^m\right.\\
&\times \binom{n}{m}\int\limits^t_0 \left(r-\widetilde{\tau}\right)^{\alpha n+\alpha-1} E_{\alpha,\alpha n+\alpha}^{n+1}(\gamma\left(r-\widetilde{\tau}\right)^{\alpha})[\psi_1(t-\tau_1-r)g_1(t-\tau_1-r)]dr\\
&+\sum\limits_{n=0}^{{\floor*{\frac{t}{\tau^*}}}}\sum\limits^n_{m=0}\delta_1^{n-m}\delta_2^{m+1}\binom{n}{m}\int\limits^t_0\left(r-\widetilde{\tau}\right)^{\alpha n+\alpha-1} E_{\alpha,\alpha n+\alpha}^{n+1}(\gamma\left(r-\widetilde{\tau}\right)^{\alpha})\psi_{1}(t-\tau_2-r)\\
&\times g_2(t-\tau_2-r)dr\Bigg]\cos{(\sqrt{a_0}x)}+\left[\psi_2(0)\sum\limits_{n=0}^{{\floor*{\frac{t}{\tau^*}}}}\sum\limits^n_{m=0}\delta_1^{n-m}\delta_2^m\binom{n}{m}\left(t-\widetilde{\tau}\right)^{\alpha n}\times\right.\\
&E_{\alpha,\alpha n+1}^{n+1}(\gamma\left(t-\widetilde{\tau}\right)^{\alpha})+\sum\limits_{n=0}^{{\floor*{\frac{t}{\tau^*}}}}\sum\limits^n_{m=0}\delta_1^{n-m+1}\delta_2^m\binom{n}{m}\int\limits^t_0 \left(r-\widetilde{\tau}\right)^{\alpha n+\alpha-1}E_{\alpha,\alpha n+\alpha}^{n+1}(\gamma\left(r-\widetilde{\tau}\right)^{\alpha})\\
&\times\left[\psi_2(t-\tau_1-r)g_1(t-\tau_1-r)\right]dr+\sum\limits_{n=0}^{{\floor*{\frac{t}{\tau^*}}}}\sum\limits^n_{m=0}\delta_1^{n-m}\delta_2^{m+1}\binom{n}{m}\int\limits^t_0\left(r-\widetilde{\tau}\right)^{\alpha n+\alpha-1}\\
&\times E_{\alpha,\alpha n+\alpha}^{n+1}(\gamma\left(r-\widetilde{\tau}\right)^{\alpha})\psi_{2}(t-\tau_2-r)g_2(t-\tau_2-r)dr\Bigg]\sin{(\sqrt{a_0}x)},
\end{align*}
where $\widetilde{\tau}=\left((n-m)\tau_1+m\tau_2\right),\ A_{j}(t)=\psi_{j}(t)$, $t\in[-\tau^{*},0]$, $\tau^{*}=\text{max}\left\{\tau_{1},\tau_{2}\right\}$, $j=1,2$.
\subsection{Another extension to generalized non-linear time-fractional PDEs with time delay}
\textbf{Generalized Form:} Consider the more generalized non-linear time-fractional PDEs with time delay
\begin{align}\label{ge}
\begin{aligned}
&\dfrac{\partial^{\alpha}u}{\partial t^{\alpha}}=\mathcal{G}[u,\bar{u}], \ \alpha>0,\\
&u(x,t)=\Upsilon(x,t),\ t\in[-\tau,0],
\end{aligned}
\end{align}
where $u=u(x,t),\ \bar{u}=u(x,t-\tau),\ \tau>0.$
Here $\mathcal{G}[u,\bar{u}]=\widetilde{\mathcal{G}}\left[x,u,\bar{u},\dfrac{\partial u}{\partial x},\dfrac{\partial \bar{u}}{\partial x}\dots,\dfrac{\partial^k u}{\partial x^k},\dfrac{\partial^k \bar{u}}{\partial x^k}\right],$ is a non-linear differential operator of order $k\ (k\in \mathbb{N})$ with time delay and $\dfrac{\partial^{\alpha}(\cdot)}{\partial t^{\alpha}}$ is a time-fractional derivatives in the Riemann-Liouville/ Caputo sense.\\
\textbf{Note:} Terms involving delay in time-fractional PDE \eqref{ge} need not be linear.\\
The linear space \eqref{lin} is said to be invariant with respect to the non-linear differential operator $\mathcal{G}[u,\bar{u}]$ if $\mathcal{G}[\mathcal{W}_{n},\mathcal{W}_n]\subseteq \mathcal{W}_n$, \textit{i.e.,} $\mathcal{G}[u,\bar{u}]\in \mathcal{W}_n$, for all $u\in \mathcal{W}_n$. If $\mathcal{W}_{n}$ is invariant under $\mathcal{G}[u,\bar{u}]$, then the invariant condition reduces the following form
\begin{equation}
\mathcal{L}\left(\mathcal{G}[u,\bar{u}]\right)\Big|_{\mathcal{L}(u)=0}=a_n\dfrac{d^n\mathcal{G}}{dx^n}+a_{n-1}\dfrac{d^{n-1}\mathcal{G}}{dx^{n-1}}+\dots+a_{1}\dfrac{d\mathcal{G}}{dx}+a_0\mathcal{G}\Big|_{\mathcal{L}(u)=0}=0,\ n\in\mathbb{N},
\end{equation}
where $\mathcal{G}[u,\bar{u}]$ is the given non-linear differential operator, and the constants $a_{n-1},\dots,a_0$ are to be determined. Then there exists $n$ functions $\Psi_j$ $(j=1,2,\dots,n)$ such that
\begin{equation}
\mathcal{G}\left[\sum\limits^{n}_{j=1}A_j\varphi_j(x), \sum\limits^{n}_{j=1}\bar{A}_j\varphi_j(x)\right]= \sum\limits^{n}_{j=1}\Psi_j(A_1,\dots, A_n,\bar{A}_1,\dots,\bar{A}_n)\varphi_j(x).
\end{equation}
\begin{thm}
If the $n-$dimensional linear space \eqref{lin} is invariant under $\mathcal{G}[u,\bar{u}]$, then the generalized non-linear time-fractional PDE with time delay \eqref{ge} possesses generalized separable solutions of the following form
\begin{equation}
u(x,t)=\sum\limits^{n}_{j=1}A_j(t)\varphi_j(x),
\end{equation}
where the coefficients $A_j(t)$ satisfy the following system of fractional delay ODEs
\begin{align*}
&\dfrac{d^\alpha A_j(t)}{dt^\alpha}=\Psi_j(A_1(t),\dots, A_n(t),{A}_1((t-\tau)),\dots,{A}_n((t-\tau))),\ j=1,\dots,n.
\end{align*}
\end{thm}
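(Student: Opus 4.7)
The plan is to mimic exactly the proof given just above for the multiple-delay theorem, since the only structural change here is that the non-linear operator $\mathcal{G}[u,\bar{u}]$ may involve $\bar{u}=u(x,t-\tau)$ non-linearly (rather than through the explicit linear term $\sum_i \delta_i \bar{u}_i$). The backbone of the argument, however, is identical: one substitutes the ansatz into the PDE, uses linearity of the Caputo/Riemann--Liouville fractional derivative on the left, uses the invariance hypothesis on the right, and finally exploits linear independence of $\{\varphi_j\}$ to read off the reduced system.

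First I would take the candidate solution $u(x,t)=\sum_{j=1}^{n}A_j(t)\varphi_j(x)$ and compute its time-fractional derivative. Because $\varphi_j(x)$ is independent of $t$ and the Caputo (or Riemann--Liouville) derivative is $\mathbb{R}$-linear with respect to functions of $t$, we get
\begin{equation*}
\dfrac{\partial^{\alpha} u(x,t)}{\partial t^{\alpha}}=\sum_{j=1}^{n}\left[\dfrac{d^{\alpha}A_j(t)}{dt^{\alpha}}\right]\varphi_j(x).
\end{equation*}
At the same time, since $\bar{u}(x,t)=u(x,t-\tau)=\sum_{j=1}^{n}A_j(t-\tau)\varphi_j(x)$, both $u(\cdot,t)$ and $\bar{u}(\cdot,t)$ lie in $\mathcal{W}_n$ for every fixed $t$. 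The invariance hypothesis $\mathcal{G}[\mathcal{W}_n,\mathcal{W}_n]\subseteq\mathcal{W}_n$, together with the decomposition stated just before the theorem, then yields functions $\Psi_j$ with
\begin{equation*}
\mathcal{G}[u,\bar{u}]=\sum_{j=1}^{n}\Psi_j\bigl(A_1(t),\dots,A_n(t),A_1(t-\tau),\dots,A_n(t-\tau)\bigr)\varphi_j(x).
\end{equation*}

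Equating the two expressions above via the PDE \eqref{ge} gives
\begin{equation*}
\sum_{j=1}^{n}\left[\dfrac{d^{\alpha}A_j(t)}{dt^{\alpha}}-\Psi_j\bigl(A_1(t),\dots,A_n(t),A_1(t-\tau),\dots,A_n(t-\tau)\bigr)\right]\varphi_j(x)=0.
\end{equation*}
Since $\{\varphi_j(x)\}_{j=1}^{n}$ is a basis of $\mathcal{W}_n$ and hence linearly independent as functions of $x$, each bracket must vanish identically in $t$, giving the claimed reduced system of fractional delay ODEs for $A_j(t)$.

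The main obstacle, if any, is conceptual rather than computational: one must be sure that the invariance hypothesis $\mathcal{G}[\mathcal{W}_n,\mathcal{W}_n]\subseteq\mathcal{W}_n$ indeed produces coefficient functions $\Psi_j$ depending on the $A_j$'s and $\bar{A}_j$'s alone, not on $x$. This is precisely what the preceding display (just before the theorem statement) asserts, so once it is invoked the rest is a direct substitution. No new analytic tool beyond the linearity of the fractional derivative and the linear independence of $\{\varphi_j\}$ is required, which is why the argument can be written in a few lines parallel to the earlier multi-delay theorem.
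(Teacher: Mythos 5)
Your proof is correct and follows exactly the route the paper intends: the paper's own proof of this theorem is literally ``Similar to the proof of Theorem 5.1,'' and your argument is that adaptation spelled out, with the only change being that the $\Psi_j$ now absorb the (possibly non-linear) dependence on $A_1(t-\tau),\dots,A_n(t-\tau)$ rather than splitting off an explicit linear delay term.
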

\begin{proof} Similar to the proof of Theorem 5.1.
\end{proof}
The Invariant subspaces of specific generalized non-linear time-fractional PDEs will be presented elsewhere.
\section{Conclusions}
In the current article, we have presented a detailed study for finding exact solutions of non-linear time-fractional PDEs involving delay. We present how the generalized non-linear time-fractional reaction-diffusion equations admit several invariant subspaces which further yields several analytical solutions. Given time-fractional PDEs with time delay are reduced to system of fractional delay ODEs by using ISM. By solving this system of fractional ODEs we obtain exact solutions of given delay fractional PDEs that can be represented in the form of polynomial, exponential and trigonometric spaces. Furthermore, we employ the ISM to solve non-linear time-fractional PDEs involving a linear term with several time delays. The effectiveness and utility of the ISM have been illustrated by finding exact solutions for non-linear time-fractional heat equation with source term involving two-time delays.
It may be further noted that the invariant subspace method can also be used to investigate exact solutions of more generalized time-fractional delay PDEs with non-linear term having time delay. Also note that the exact solutions of given generalized non-linear time-fractional reaction-diffusion equations with time delay thus obtained have not been reported in the existing literature. The calculated analytical solutions will play vital role in further research. These results demonstrate that the ISM is a very efficient and effective algorithmic tool to find exact solutions for non-linear time-fractional PDEs with time delay.

\end{document}